\def\t{\otimes}
\def \Im{\mathop{\sf Im}\nolimits}
\def \Zenter{\mathop{\sf Z}\nolimits}
\newcommand{\slie}{\rm \texttt{SLie}_{\mathbb{K}}}
\newcommand{\cross}{\rm \texttt{Cross}}
\newcommand{\cero}{\bar{0}}
\newcommand{\scero}{_{\bar{0}}}
\newcommand{\suno}{_{\bar{1}}}
\newcommand{\menosuno}[2]{(-1)^{|#1| |#2|}}
\newcommand{\acts}[1]{\mbox{}^{#1}}
\newcommand{\nh}{\mathcal{H}}
\newcommand{\xfree}{X_{M, N}}
\DeclareMathOperator{\Ker}{\sf Ker}
\DeclareMathOperator{\Coker}{\sf Coker}
\DeclareMathOperator{\Ima}{\sf Im}
\DeclareMathOperator{\Tor}{\sf Tor}
\DeclareMathOperator{\Der}{Der}
\DeclareMathOperator{\End}{End}
\DeclareMathOperator{\ab}{ab}
\DeclareMathOperator{\ad}{\sf ad}
\DeclareMathOperator{\ide}{\sf id}
\DeclareMathOperator{\class}{cl}
\DeclareMathOperator{\Ho}{H}
\DeclareMathOperator{\magm}{mag}
\DeclareMathOperator{\alg}{alg}
\DeclareMathOperator{\efe}{F}
\DeclareMathOperator{\hc}{HC}
\DeclareMathOperator{\II}{I}
\DeclareMathOperator{\VV}{V}
\DeclareMathOperator{\UU}{U}
\newtheorem{theorem}{Theorem}[section]
\newtheorem{lemma}[theorem]{Lemma}
\newtheorem{proposition}[theorem]{Proposition}
\newtheorem{cor}[theorem]{Corollary}
\newtheorem{Con}[theorem]{Construction}
\theoremstyle{definition}
\newtheorem{definition}[theorem]{Definition}
\newtheorem{example}[theorem]{Example}
\theoremstyle{remark}
\newtheorem*{remark}{Remark}
\begin{document}

\title{Non-abelian tensor product  and homology of Lie superalgebras}

\author{X.~García-Martínez, E.~Khmaladze, M.~Ladra}

\address{\small Xabier García-Martínez: \;\rm  Department of Algebra, IMAT, University of Santiago de Compostela, 15782
Santiago de Compostela, Spain}
\email{xabier.garcia@usc.es}
\address{\small Emzar Khmaladze: \;\rm A. Razmadze Mathematical
Institute of Tbilisi State University, Tamarashvili Str. 6, 0177 Tbilisi, Georgia}
\email{e.khmal@gmail.com}
\address{\small Manuel Ladra: \;\rm  Department of Algebra, IMAT, University of Santiago de Compostela, 15782
Santiago de Compostela, Spain}
\email{manuel.ladra@usc.es}

\begin{abstract}
We introduce the non-abelian tensor product of Lie superalgebras and study some of its properties.
We use it to describe the universal central extensions of Lie superalgebras.
We present the low-dimensional non-abelian homology of Lie superalgebras
and  establish its relationship with the cyclic homology of associative superalgebras.
We also define the non-abelian exterior product and give an analogue of Miller's theorem, Hopf formula and
a six-term exact sequence for the homology of Lie superalgebras.
\end{abstract}

\subjclass[2010]{17B55,  17B30,  17B60}
\keywords{Lie superalgebras,  associative superalgebras, non-abelian tensor and exterior products, non-abelian homology,
cyclic homology, Hopf formula, crossed module}

\maketitle

\section{Introduction}
In \cite{BrLo}, Brown and Loday introduced the non-abelian tensor product of groups in the context of an application in homotopy theory.
Analogous theories of non-abelian tensor product have been developed in other algebraic structures such as Lie algebras \cite{Ell1}
and Lie--Rinehart algebras \cite{CGL}. In \cite{Ell1}, Ellis investigated the main properties of the non-abelian tensor product
of Lie algebras and its relation to the low-dimensional homology of Lie algebras. In particular, he described
the universal central extension of a perfect Lie algebra via the non-abelian tensor product.
In \cite{Ell2}, the non-abelian exterior product of Lie algebras is introduced  and  a six-term exact sequence
relating low-dimensional homologies is obtained. In \cite{Gui}, using the non-abelian tensor product,
Guin defined the non-abelian low-dimensional homology of Lie algebras and  compared these groups
with the cyclic homology and Milnor additive $K$-theory of associative algebras.

The theory of Lie superalgebras, also called $\mathbb{Z}_2$-graded Lie algebras, has aroused much interest both in mathematics and physics.
Lie superalgebras play a very important role in theoretical physics since they are used to describe supersymmetry in a mathematical framework.
A comprehensive description of the mathematical theory of Lie superalgebras is given in \cite{Kac},
containing the complete classification of all finite-dimensional simple Lie superalgebras
over an algebraically closed field of characteristic zero. In the last few years, the theory of Lie superalgebras
has experienced a remarkable evolution obtaining many results on representation theory and classification,
most of them extending well-known facts on Lie algebras.

In this paper we develop the  non-abelian tensor product and the low-dimensional non-abelian homology of Lie superalgebras,
generalizing the corresponding notions for Lie algebras, with applications in universal central extensions
and homology of Lie superalgebras and cyclic homology of associative superalgebras.

The organization of this paper is as follows: after this introduction, in Section~\ref{S:prel} we give
some definitions and necessary well-known results for the development of the paper.
We also introduce actions and crossed modules of Lie superalgebras.
In Section~\ref{S:tensor} we introduce the non-abelian tensor product of Lie superalgebras, we establish its principal properties
such as right exactness and relation with the tensor product of supermodules.
We describe the universal central extension of a perfect Lie superalgebra via the non-abelian tensor product (Theorem~\ref{T:uce}).
In particular, applying this theorem, we obtain that $\mathfrak{st}(m, n, A)$ is the universal central extension
of $\mathfrak{sl}(m, n, A)$, for $m + n \geq 5$, where $A$ is a unital associative superalgebra.
We also study nilpotency and solvability of the non-abelian tensor product of Lie superalgebras (Theorem \ref{Th_nil}).
Using the non-abelian tensor product, in Section~\ref{S:nahom} we introduce the low-dimensional non-abelian homology of Lie superalgebras
with coefficients in crossed modules. We show that, if the crossed module is a supermodule,
then the non-abelian homology is the usual homology of Lie superalgebras.
Then we apply this non-abelian homology to relate cyclic homology and Milnor cyclic homology
of associative superalgebras, extending the results of \cite{Gui}.
Finally, in the last section we construct the non-abelian exterior product of Lie superalgebras and
we use it to obtain Miller's type theorem for free Lie superalgebras, Hopf formula and
a six-term exact sequence in the homology of Lie superalgebras.

\subsection*{Conventions and notations}

Throughout this paper we denote by $\mathbb{K}$ a unital commutative ring unless otherwise stated.
All modules and algebras are defined over $\mathbb{K}$. We write $\mathbb{Z}_2=\{\bar{0},\bar{1}\}$ and use its standard field structure. We put $(-1)^{\bar{0}}=1$ and $(-1)^{\bar{1}}=-1$.

By \emph{a supermodule} $M$ we mean a module endowed with a $\mathbb{Z}_2$-gradation: $M=M_{\bar{0}}\oplus M_{\bar{1}}$.
We call elements of $M_{\bar{0}}$ (resp. $M_{\bar{1}}$) even (resp. odd). Non-zero elements of $M_{\bar{0}}\cup M_{\bar{1}}$
will be called \emph{homogeneous}. For a homogeneous $m\in M_{\bar{\alpha}}$, $\bar{\alpha}\in \mathbb{Z}_2$,
its degree will be denoted by $|m|$. We adopt the convention that whenever the degree function occurs in a formula,
the corresponding elements are supposed to be homogeneous. By \emph{a homomorphism of supermodules} $f\colon M\to N$
of degree $|f|\in \mathbb{Z}_2$ we mean a linear map satisfying $f(M_{\bar{\alpha}})\subseteq N_{\bar{\alpha}+|f|}$.
In particular, if $|f|=\bar{0}$,  then the homomorphism $f$ will be called \emph{of even grade (or even linear map)}.

By \emph{a superalgebra} $A$ we mean a supermodule $A=A_{\bar{0}}\oplus A_{\bar{1}}$ equipped with
a bilinear multiplication satisfying $A_{\bar{\alpha}}A_{\bar{\beta}}\subseteq A_{\bar{\alpha}+\bar{\beta}}$, for $\bar{\alpha}, \bar{\beta}\in \mathbb{Z}_2$.

\section{Preliminaries on Lie Superalgebras}\label{S:prel}

In this section we review some terminology on Lie superalgebras and recall notions used in the paper. We mainly
follow \cite{CCF,Mus}, although with some modifications. We also introduce notions of actions and crossed modules of Lie superalgebras.

\subsection{Definition and some examples of Lie superalgebras}

\begin{definition}
 A \emph{Lie superalgebra} is a superalgebra $M=M_{\bar{0}}\oplus M_{\bar{1}}$ with a multiplication
 denoted by $[ \ , \ ]$, called bracket operation, satisfying  the following identities:
\begin{align*}
[x, y] &= -\menosuno{x}{y}[y, x], \\
\big[x, [y, z]\big] &= \big[[x, y], z\big] + \menosuno{x}{y}\big[y, [x, z]\big],\\
[m\scero, m\scero] &= 0,
\end{align*}
for all homogeneous elements $x, y, z \in M$ and $m_{\cero} \in M\scero$.
\end{definition}

Note that the last equation is an immediate consequence of the first one in the case $2$ has an inverse in $\mathbb{K}$.
Moreover, it can be easily seen that the second equation is equivalent to the graded Jacobi identity
\[
\menosuno{x}{z} \big[x, [y, z]\big] + \menosuno{y}{x} \big[y, [z, x]\big] + \menosuno{z}{y} \big[z, [x, y]\big] = 0.
\]

For a Lie superalgebra $M=M_{\bar{0}}\oplus M_{\bar{1}}$, the even part $M_{\bar{0}}$ is a Lie algebra.
Hence, if $M_{\bar{1}}=0$, then $M$ is just a Lie algebra.  A Lie superalgebra $M$ without even part, i. e.,
$M_{\bar{0}}=0$, is an \emph{abelian Lie superalgebra}, that is, $[x, y]= 0$ for all $x, y \in M$.

A \emph{Lie superalgebra homomorphism} $f \colon M \to M'$ is a supermodule homomorphism of even grade such that $f[x, y] = [f(x), f(y)]$ for all $x, y \in M$.

\begin{example}\label{E:sup}
\begin{enumerate}[leftmargin=0.cm,itemindent=.5cm,labelsep=0.2cm,align=left]
\item []
\item[(i)] Any associative superalgebra $A$ can be considered as a Lie superalgebra with the bracket
\[
[a, b] = ab - \menosuno{a}{b} ba.
\]

\item[(ii)] Let $m$, $n$ be positive integers and $A$  a unital associative  superalgebra.  Consider
the algebra $\mathcal{M}(m,n,A)$ of all  $(m+n)\times (m+n)$-matrices with entries in $A$ and with the usual product of matrices.
A $\mathbb{Z}_2$-gradation is defined  as follows: homogeneous elements are matrices $E_{ij}(a)$ having the homogeneous element
 $a\in A$ at the position $(i,j)$ and zero elsewhere, and $|E_{ij}(a)|=|i|+|j|+|a|$, where $|i|=\bar{0}$ if $1\leq i\leq m$ and  $|i|=\bar{1}$ if $m+1\leq i\leq m+n$. With
this gradation, $\mathcal{M}(m,n,A)$ turns out to be an associative superalgebra. The corresponding Lie superalgebra will be denoted by $\mathfrak{gl}(m,n,A)$.

\item[(iii)] Let $V=V\scero \oplus V\suno$ be a supermodule. Then the supermodule $\End_{\mathbb{K}}(V)$ of all linear endomorphisms $V\to V$ (of both degrees $0$ and $1$)  has a structure of an associative superalgebra with respect to composition (see \cite{CCF}) and hence becomes a Lie superalgebra. In particular, if  the ground ring $\mathbb{K}$ is a field, and $m$, $n$ are dimensions of $V\scero$ and $V\suno$ respectively, then choosing a homogeneous basis of $V$ ordered such that even elements stand before odd, the elements of $\End_{\mathbb{K}}(V)$ can be seen as $(m+n)\times (m+n)$-square matrices
\[
\begin{pmatrix}
a & b \\
c & d
\end{pmatrix}
\]
where $a, b, c$ and $d$ are respectively $m\times m$, $m \times n$, $n \times m$ and $n \times n$ matrices with entries in $\mathbb{K}$.
The even elements are the matrices with $b = c = 0$ and the odd elements are matrices with $a = d = 0$.
\end{enumerate}
\end{example}

Let $M$ and $N$ be two submodules of a Lie superalgebra $P$. We denote by $[M, N]$ the submodule of $P$ spanned
by all elements $[m, n]$ with $m \in M$ and $n \in N$. A $\mathbb{Z}_2$-graded submodule $M$ is a \emph{graded ideal}
of $P$ if $[M, P] \subseteq M$. In particular, the submodule $\Zenter (P) =  \{ c \in P : [c, p] = 0 \text{ for all } p \in P \}$
is a graded ideal and it is called the \emph{centre} of $P$. Clearly if $M$ and $N$ are graded ideals of $P$, then so is $[M, N]$.

Let $M$ be a Lie superalgebra and $D \in \End_{\mathbb{K}}(M)$. We say that $D$ is a \emph{derivation} if for all $x, y \in M$
\[
D([x, y]) = [D(x), y] + \menosuno{D}{x}[x, D(y)].
\]
We denote by $\big(\Der_{\mathbb{K}}(M)\big)_{\bar{\alpha}}$ the set of homogeneous derivations of degree $\bar{\alpha}\in \mathbb{Z}_2$.
 One verifies that the supermodule of derivations
\[
\Der_{\mathbb{K}}(M) = \big(\Der_{\mathbb{K}}(M)\big)\scero \oplus \big(\Der_{\mathbb{K}}(M)\big)\suno
\]
 is a subalgebra of the Lie superalgebra $\End_{\mathbb{K}}(M)$.

\subsection{Actions and crossed modules of Lie superalgebras}
\begin{definition}
Let $P$ and $M$ be two Lie superalgebras. By an \emph{action} of $P$ on $M$ we mean a $\mathbb{K}$-bilinear map of even grade,
\[
P \times M \to M, \quad (p,m) \mapsto \ ^pm,
\]
such that
\begin{enumerate}
\item[(i)]   $ \quad \! ^{[p,p']}m = \ ^{p}({}^{p'}\!m) - \menosuno{p}{p'} \ ^{p'}(^{p}m), $
\item[(ii)]   $ ^{p}[m,m'] = [ ^{p}m, m'] + \menosuno{p}{m} [m, {}^{p}m' ], $
\end{enumerate}
for all homogeneous $p, p' \in P$ and $m, m' \in M$.

The action is called \emph{trivial} if $^{p}m=0$ for all $p\in P$ and $m\in M$.
\end{definition}

For example, if $M$ is a graded ideal and $P$ is a subalgebra of a Lie superalgebra $Q$, then the bracket in $Q$ induces an action of $P$ on $M$.

Note that the action of $P$ on $M$ is the same as a Lie superalgebra homomorphism $P \to \Der_{\mathbb{K}}(M)$.

\begin{remark} If $M$ is an abelian Lie superalgebra enriched with an action of a Lie superalgebra $P$,
then $M$ has a structure of a supermodule over $P$ ($P$-supermodule, for short) (see e. g. \cite{Mus}),
that is, there is a $\mathbb{K}$-bilinear map of even grade $P \times M \to M$, $(p, m) \mapsto pm$, such that
\[
[p, p']m = p(p'm) - \menosuno{p}{p'} p'(pm),
\]
for all homogeneous $p, p' \in P$ and $m \in M$.
\end{remark}

Note that a $P$-supermodule $M$ is the same as a $\mathbb{K}$-supermodule $M$ together with a Lie superalgebra homomorphism $P \to \End_{\mathbb{K}}(M)$.

\begin{definition}
Given two Lie superalgebras $M$ and $P$ with an action of $P$ on $M$, we can define the \emph{semidirect product $M \rtimes P$}
with the underlying supermodule $M \oplus P$ endowed with the bracket given by
\[
[(m, p), (m', p')] = ([m, m'] + \acts{p}m' - \menosuno{m}{p'} ({}\acts{p'}\!m), [p, p']).
\]
\end{definition}

Now we are ready to introduce the following notion of crossed modules of Lie superalgebras (see also \cite[Definition 5]{ZL}).
\begin{definition}\label{D:crossed}
A \emph{crossed module of Lie superalgebras} is a homomorphism of Lie superalgebras $\partial \colon M \to P$ with an action of $P$ on $M$ satisfying
\begin{enumerate}
\item[(i)] $ \; \partial(^p m) = [p, \partial(m)]$,
\item[(ii)] $\acts{\partial(m)} m' = [m, m']$,
\end{enumerate}
for all $p \in P$ and $m, m' \in M$.
\end{definition}

\begin{example}
There are some standard examples of crossed modules:
\begin{enumerate}[leftmargin=0.cm,itemindent=.5cm,labelsep=0.2cm,align=left]
\item[(i)] The inclusion $ M \hookrightarrow P$ of a graded ideal $M$ of a Lie superalgebra $P$ is a crossed module of Lie superalgebras.
\item[(ii)] If $P$ is a Lie superalgebra and $M$ is a $P$-supermodule, the trivial map $0 \colon M \to P$ is a crossed module of Lie superalgebras.
\item[(iii)] A central extension  of Lie superalgebras $\partial \colon M \twoheadrightarrow P$ (i.e., $\Ker \partial \subseteq \Zenter(M)$)
 is a crossed module of Lie superalgebras. Here the action of $P$ on $M$ is given by $\acts{p}m = [\bar{m}, m]$, where $\bar{m} \in M$ is any element of $\partial^{-1}(p)$.
\item[(iv)] The homomorphism of Lie superalgebras $\partial \colon M \to \Der_{\mathbb{K}}(M)$ which sends $m \in M$
to the inner derivation $\ad (m) \in \Der_{\mathbb{K}}(M)$, defined by $\ad (m)(m') = [m, m']$, together with the action of
 $\Der_{\mathbb{K}}(M)$ on $M$ given by ${}^D m = D(m)$, is a crossed module of Lie superalgebras.
\end{enumerate}
\end{example}

\begin{lemma}\label{L:crosker}
Let $\partial \colon M \to P$ be a crossed module of Lie superalgebras. Then the following conditions are satisfied:
\begin{itemize}[font=\normalfont]
\item[(i)] The kernel of $\partial$ is in the centre of $M$.
\item[(ii)] The image of $\partial$ is a graded ideal of $P$.
\item[(iii)] The Lie superalgebra $\Ima \partial$ acts trivially on the centre $\Zenter(M)$, and so trivially
 on $\Ker \partial$. Hence $\Ker \partial$ inherits an action of $P/\Ima \partial$ making $\Ker \partial$ a $P / \Ima \partial$-supermodule.
\end{itemize}
\end{lemma}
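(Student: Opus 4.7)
\smallskip
\noindent\textbf{Proof plan.} The plan is to verify the three items in order, each by direct manipulation of the two axioms (i) and (ii) in Definition~\ref{D:crossed} together with basic properties of the bracket.

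For item (i), I would take an arbitrary $m \in \Ker \partial$ and a homogeneous $m' \in M$. Axiom (ii) of Definition~\ref{D:crossed} gives $[m, m'] = \acts{\partial(m)} m' = \acts{0} m' = 0$, using $\mathbb{K}$-bilinearity of the action in the first slot. This shows $m \in \Zenter(M)$.

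For item (ii), I would take $\partial(m) \in \Ima \partial$ and $p \in P$ and use axiom (i) of Definition~\ref{D:crossed} to rewrite
\[
[p, \partial(m)] = \partial(\acts{p} m),
\]
which lies in $\Ima \partial$. Homogeneity of this bracket in $\mathbb{Z}_2$ is automatic since $\partial$ and the action are of even grade, so $\Ima \partial$ is a graded submodule and hence a graded ideal.

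For item (iii), I would take any $\partial(m') \in \Ima \partial$ and $c \in \Zenter(M)$. Axiom (ii) gives
\[
\acts{\partial(m')} c = [m', c] = 0
\]
since $c$ is central. Combined with item (i), this gives that $\Ima \partial$ acts trivially on $\Ker \partial$. It remains to note that the $P$-action restricts to an action on $\Ker \partial$: if $\partial(m) = 0$, then by axiom (i), $\partial(\acts{p} m) = [p, \partial(m)] = 0$, so $\acts{p} m \in \Ker \partial$. Hence the induced bilinear map $P \times \Ker \partial \to \Ker \partial$ factors through the quotient $P/\Ima \partial$, and abelianness of $\Ker \partial$ (from item (i)) ensures this factorised action satisfies the supermodule axiom as in the Remark preceding Definition~\ref{D:crossed}.

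There is no real obstacle here; the entire argument is a direct unfolding of the crossed module axioms, and the only mild subtlety is keeping the $\mathbb{Z}_2$-gradings and even-grade hypotheses on $\partial$ and the action visible throughout, so that ``graded ideal'' and ``$P/\Ima\partial$-supermodule'' hold with the correct gradings rather than only as ungraded objects.
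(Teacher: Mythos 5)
Your proof is correct and is exactly the direct unfolding of the two crossed-module axioms that the paper intends when it declares the lemma ``an immediate consequence of Definition~\ref{D:crossed}''; the paper gives no further detail, and your verification of (i)--(iii), including the observation that the $P$-action restricts to $\Ker\partial$ and factors through $P/\Ima\partial$, supplies precisely the omitted routine steps.
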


\begin{proof}
This is an immediate consequence of Definition~\ref{D:crossed}.
\end{proof}

\subsection{Free Lie superalgebra and enveloping superalgebra of a Lie superalgebra}

\begin{definition}
The \emph{free Lie superalgebra} on a $\mathbb{Z}_2$-graded set $X = X\scero \cup X\suno$ is a Lie superalgebra $\efe(X)$ together
 with a degree zero map $i \colon X \to \efe(X)$ such that if $M$ is any Lie superalgebra and $j \colon X \to M$ is a degree zero map,
  then there is a unique Lie superalgebra homomorphism $h \colon \efe(X) \to M$ with $j = h \circ i$.
\end{definition}

The existence of free Lie superalgebras is guaranteed by an analogue of Witt's theorem (see \cite[Theorem 6.2.1]{Mus}).
In the sequel we need the following construction of the free Lie superalgebra.

\begin{Con}\label{C:free}
Let $X = X\scero \cup X\suno$ be a $\mathbb{Z}_2$-graded set. Denote by  $\magm(X)$ the free magma over the set $X$.
The free superalgebra  on $X$, denoted by $\alg(X)$, has as elements the finite sums
$\sum_i \lambda_i v_i$, where $\lambda_i \in \mathbb{K}$ and $x_i$ are elements of $\magm(X)$
and the multiplication in $\alg(X)$ extends the multiplication in $\magm(X)$. Note that the grading is naturally defined in $\alg(X)$.
The free Lie superalgebra $\efe(X)$ is the quotient $\alg(X)/I$, where $I$ is the graded ideal generated by the elements
\begin{gather*}
xy + \menosuno{x}{y}yx, \\
\menosuno{x}{z}\big(x(yz)\big) + \menosuno{y}{x}\big(y(zx)\big) + \menosuno{z}{y}\big(z(xy)\big), \\
x\scero x\scero,
\end{gather*}
for all homogeneous $x, y, z \in X$ and $x_{\cero} \in X\scero$.
\end{Con}

\begin{definition}
 The \emph{universal enveloping superalgebra} of a Lie superalgebra $M$ is a pair $(\UU(M), \sigma)$, where $\UU(M)$ is
 a unital associative superalgebra and $\sigma \colon M \to \UU(M)$ is an even  linear map satisfying
\begin{equation}\label{E:enveloping}
\sigma[x, y] = \sigma(x)\sigma(y) - \menosuno{x}{y}\sigma(y)\sigma(x),
\end{equation}
for all homogeneous $x, y \in M$, such that the following universal property holds: for any other pair $(A, \sigma')$,
where $A$ is a unital associative superalgebra and $\sigma' \colon M \to A$ is an even linear map satisfying \eqref{E:enveloping},
there is a unique superalgebra homomorphism $f \colon \UU(M) \to A$ such that $f \circ \sigma = \sigma'$.
\end{definition}

Now we need to recall (see e. g. \cite{Sch}) that, given two supermodules $M$ and $N$,
the tensor product of modules $M \t_{\mathbb{K}} N$ has a natural supermodule structure with   $\mathbb{Z}_2$-grading given by
\[
(M \t_{\mathbb{K}} N)_{\bar{\alpha}} = \bigoplus_{\bar{\beta} + \bar{\gamma} = \bar{\alpha}} (M_{\bar{\beta}} \t_{\mathbb{K}} N_{\bar{\gamma}}).
\]
In particular, the tensor power $M^{\otimes n}$, $n\geq 2$, has the induced $\mathbb{Z}_2$-grading. Hence the tensor algebra $T(M)$ has the $\mathbb{Z}_2$-grading extending that of $M$. We call $T(M)$ \emph{the tensor superalgebra}.

\begin{Con}\label{C:enveloping}
Let $M$ be a Lie superalgebra and  $T(M)$  the tensor superalgebra over the underlying supermodule of $M$.
 Consider the two-sided ideal $J(M)$ of $T(M)$ generated by all elements of the form
\[
m\t m' - \menosuno{m}{m'}m'\t m -[m,m'],
\]
for all homogeneous $m,m'\in M$. Then the quotient $\UU(M)=T(M)/J(M)$ is a unital associative  superalgebra.
By composing the canonical inclusion $M\to T(M)$ with the canonical projection $T(M)\to\UU(M)$ we get
the  canonical even linear map $\sigma\colon M\to\UU(M)$. Then the pair $(\UU(M), \sigma)$ is the universal enveloping superalgebra of $M$ (see \cite{CCF}).
\end{Con}

Note that, as in the Lie algebra case, the universal enveloping superalgebra turns out to be
a very useful tool for the representation theory of Lie superalgebras. In particular, by the universal property,
 it follows that a Lie supermodule over a Lie superalgebra $M$ is the same as
  a $\mathbb{Z}_2$-graded (left) $\UU(M)$-module (see \cite[Chapter 1]{Sch}).

Let us consider $\mathbb{K}$ with $\mathbb{Z}_2$-grading concentrated in degree zero, that is, with $\mathbb{K}_{\bar{1}}=0$.
Then the trivial map from a Lie superalgebra $M$ into $\mathbb{K}$ gives rise to a unique homomorphism
of superalgebras $\varepsilon \colon \UU(M)\to \mathbb{K}$.
The kernel of $\varepsilon$, denoted by $\Omega(M)$, is called the \emph{augmentation ideal} of $M$.
Obviously, $\Omega(M)$ is just the graded ideal of $\UU(M)$ generated by $\sigma(M)$.

\subsection{Homology of Lie superalgebras}
Now we briefly recall from \cite{Mus, Tan} the definition of homology of Lie superalgebras.

The \emph{Grassmann algebra} of a Lie superalgebra $P$,  denoted by $\bigwedge_{\mathbb{K}}(P)$,
is defined to be the quotient of the tensor superalgebra $T(P)$ of $P$ by the ideal generated by the elements
\[
x \t y + \menosuno{x}{y} y \t x,
\]
for all homogeneous $x, y \in P$. Note that $ \bigwedge_{\mathbb{K}}(P) = \bigoplus_{n>0}\bigwedge_{\mathbb{K}}^n(P)$, where $\bigwedge_{\mathbb{K}}^n(P)$ is the image of $P^{\otimes n}$ in $\bigwedge_{\mathbb{K}}(P)$,  has an induced $P$-supermodule structure given by
\[
x(x_1 \wedge \cdots \wedge x_n) = \sum_{i=1}^n (-1)^{|x|\sum_{k<i}|x_k|}(x_1 \wedge \cdots \wedge [x, x_i] \wedge \cdots \wedge x_n).
\]

Let $M$ be a $P$-supermodule and consider the chain complex $(C_{*}(P, M), d_{*})$ defined by
$C_n(P, M) = \bigwedge_{\mathbb{K}}^n(P) \t_{\mathbb{K}} M$, for $n \geq 0$, with boundary maps
$d_n \colon C_n(P, M) \to C_{n-1}(P, M)$ defined on generators by
\begin{align*}
d_n&(x_1 \wedge \cdots \wedge x_n \t y)=
\sum_{i=1}^n (-1)^{i + |x_i|\sum_{k>i}|x_k|}(x_1 \wedge \cdots \wedge \hat{x}_i \wedge \cdots \wedge x_n \t x_iy) \\
{} &+ \sum_{i<j} (-1)^{i + j + |x_i|\sum_{k<i}|x_k| + |x_j|\sum_{l<j}|x_l| + |x_i||x_j|}([x_i, x_j] \wedge
\cdots \wedge \hat{x}_i \wedge \cdots \wedge \hat{x}_j \wedge \cdots \wedge x_n \t y).
\end{align*}
The $n$-th \emph{homology of the Lie superalgebra} $P$ with coefficients in the $P$-supermodule $M$, $\Ho_n(P, M)$, is the $n$-th homology of the chain complex $(C_{*}(P, M), d_{*})$, i.e.
\[
\Ho_n(P, M) = \dfrac{\Ker d_{n}}{\Ima d_{n+1}}.
\]
If $\mathbb{K}$ is regarded as a trivial $P$-supermodule, we write $\Ho_n(P)$ for $\Ho_n(P, \mathbb{K})$.

In the case when the ground ring $\mathbb{K}$ is a field, there is a relation between $\Tor$ functor and the homology (see \cite{Mus}) given by
\[
\Ho_n(P, M) \cong \Tor_n^{\UU(P)}(\mathbb{K}, M).
\]
By analogy to Lie algebras (see e. g. \cite{HiSt}), we have the following isomorphisms
\begin{equation}\label{E:zero_homology}
H_0(P,M)\cong \Coker\big(\Omega(P)\t_{\UU(P)}M\longrightarrow M\big),
\end{equation}
\begin{equation}\label{E:first_homology}
H_1(P,M)\cong \Ker\big(\Omega(P)\t_{\UU(P)}M\longrightarrow M\big).
\end{equation}

\section{Non-abelian tensor product of Lie superalgebras}\label{S:tensor}
In this section we introduce a non-abelian tensor product of Lie superalgebras, which generalizes the non-abelian tensor product of Lie algebras \cite{Ell1},
and study its properties.

\subsection{Construction of the non-abelian tensor product}

\begin{definition}\label{D:tensor}
Let $M$ and $N$ be two Lie superalgebras with actions on each other. Let $\xfree$ be the $\mathbb{Z}_2$-graded set of all symbols
$m \t n$, where $m \in M\scero \cup M\suno$, $n \in N\scero \cup N\suno$ and the $\mathbb{Z}_2$-gradation is given by
$|m\t n| = |m| + |n|$. We define the \emph{non-abelian tensor product} of $M$ and $N$, denoted by $M \t N$,
as the Lie superalgebra generated by $\xfree$ and subject to the relations:
\begin{enumerate}
\item[(i)] $\lambda(m \t n) = \lambda m \t n = m \t \lambda n$,
\item[(ii)] $\begin{aligned}[t]
    (m + m') \t n &= m \t n + m' \t n, \quad \ \text{where}  \ m, m' \ \text{have the same grade}, \\
    m \t (n + n') &= m \t n + m \t n', \quad \ \text{where}  \ n, n' \ \text{have the same grade},
  \end{aligned}$
\item[(iii)] $\begin{aligned}[t]
    [m, m'] \t n & = m \t {}\acts{m'}\!n - \menosuno{m}{m'} (m' \t \acts{m} n), \\
          m \t [n, n'] &= (-1)^{|n'|(|m|+|n|)} ({}\acts{n'}\!m \t n) - \menosuno{m}{n} (\acts{n}m \t n'),
  \end{aligned}$
\item[(iv)] $[m \t n, m' \t n'] = - \menosuno{m}{n} (^n m \t {}\acts{m'}\!n')$,
\end{enumerate}
for every $\lambda \in \mathbb{K}$, $m, m' \in M\scero \cup M\suno$ and $n, n' \in N\scero \cup N\suno$.
\end{definition}

Let us remark that if $m = m\scero + m\suno$ is any element of $M$ and $n = n\scero + n\suno$ is any element of $N$, then under the notation $m \t n$ we mean the sum
\[
m\scero \t n\scero + m\scero \t n\suno + m\suno \t n\scero + m\suno \t n\suno.
\]

If $M = M\scero$ and $N = N\scero$ then $M \t N$ is the non-abelian tensor product of Lie algebras introduced and studied in \cite{Ell1} (see also \cite{InKhLa}).

\begin{definition}
Actions of Lie superalgebras $M$ and $N$ on each other are said to be \emph{compatible} if
\begin{enumerate}
\item[(i)] $ \; ^{(^{n}m)}n' =-\menosuno{m}{n}[\acts{m}n, n']$,
\item[(ii)] $^{(^{m}n)}m' =-\menosuno{m}{n}[\acts{n}m, m']$,
\end{enumerate}
for all $m, m' \in M\scero \cup M\suno$ and $n, n' \in N\scero \cup N\suno$.
\end{definition}
For example, if $M$ and $N$ are two graded ideals of some Lie superalgebra, the actions induced by the bracket are compatible.

\begin{proposition}\label{P:tensoriso}
Let $M$ and $N$ be Lie superalgebras acting compatibly on each other. Then there is a natural isomorphism of Lie superalgebras
\[
M \t N \cong \dfrac{M \t_{{\mathbb{K}}} N}{D(M, N)},
\]
where $D(M, N)$ is the submodule of the supermodule $M \t_{\mathbb{K}} N$ generated by the elements
\begin{enumerate}[font=\normalfont]
\item[(i)] $[m, m'] \t n - m \t \acts{m'}\!n + \menosuno{m}{m'} (m' \t \acts{m} n)$,
\item[(ii)] $m \t [n, n'] - (-1)^{|n'|(|m|+|n|)} (\acts{n'}\!m \t n) + \menosuno{m}{n} (\acts{n}m \t n')$,
\item[(iii)] $(\acts{n}m) \t (\acts{m}n)$, with $|m| = |n|$
\item[(iv)] $(-1)^{|m||n|}(\acts{n}m) \t (\acts{m'}\!n') + (-1)^{(|m|+|n|)(|m'|+|n'|) + |m'||n'|} (\acts{n'}\!m') \t (\acts{m}n)$,
\item[(v)] $\underset{(m, n), (m', n'), (m'', n'')}{\text{\Huge$\circlearrowleft$}}(-1)^{(|m|+|n|)(|m''|+|n''|) + |m||n| + |m'||n'|} [^n m, \acts{n'}\!m'] \t (\acts{m''}\!n'') $,
\end{enumerate}
for all $m, m', m'' \in M\scero \cup M\suno$ and $n, n', n'' \in N\scero \cup N\suno$,
 where $\underset{x, y, z}{\text{\Huge$\circlearrowleft$}}$ denotes the cyclic summation with respect to $x,y,z$.
\end{proposition}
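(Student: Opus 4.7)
My plan is to exhibit mutually inverse Lie superalgebra homomorphisms between $M\t N$ and the quotient $Q(M,N) := (M\t_{\mathbb{K}} N)/D(M,N)$, via the obvious assignment $m\t n \leftrightarrow \overline{m\otimes_{\mathbb{K}} n}$. The bulk of the work is setting up the Lie superalgebra structure on $Q(M,N)$.

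First, I would define a bracket on $Q(M,N)$ by the formula
\[
[\overline{m\otimes n},\, \overline{m'\otimes n'}] \;=\; -\menosuno{m}{n}\,\overline{(\acts{n}m)\otimes (\acts{m'}n')}.
\]
I would extend this by bilinearity, then check it is well-defined modulo $D(M,N)$: the main inputs are relations (i), (ii) of $D(M,N)$ (needed to see that replacing one factor by a generator of types (i), (ii) lands in $D(M,N)$, using the action axioms and compatibility) and relation (iv) (needed to verify graded antisymmetry on generators). Relation (iii) of $D(M,N)$ ensures $[x_{\cero},x_{\cero}]=0$ on even homogeneous elements. Graded antisymmetry follows directly from (iv), and the graded Jacobi identity on homogeneous generators reduces, after expanding both sides using the bracket formula, precisely to the cyclic sum in relation (v). Thus $Q(M,N)$ is a Lie superalgebra.

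Second, I would construct a map $\varphi\colon M\t N \to Q(M,N)$ by sending the generator $m\t n \in \xfree$ to $\overline{m\otimes n}$ and extending to the free Lie superalgebra on $\xfree$. To see $\varphi$ descends to $M\t N$, I check the four families (i)--(iv) from Definition~\ref{D:tensor}: families (i) and (ii) hold because $\otimes_{\mathbb{K}}$ is $\mathbb{K}$-bilinear; family (iii) is exactly the relations (i), (ii) defining $D(M,N)$; and family (iv) is the very formula used to define the bracket on $Q(M,N)$. Hence $\varphi$ is a well-defined Lie superalgebra homomorphism.

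Third, I would construct the inverse $\psi\colon Q(M,N) \to M\t N$. The even $\mathbb{K}$-bilinear map $M\times N \to M\t N$, $(m,n)\mapsto m\t n$ (which is bilinear by (i), (ii) of Definition~\ref{D:tensor}), factors through $M\otimes_{\mathbb{K}} N$. I must then show that every generator (i)--(v) of $D(M,N)$ is sent to $0$ in $M\t N$: generators (i), (ii) of $D(M,N)$ vanish by relation (iii) of Definition~\ref{D:tensor}; generator (iii) is a special case of generator (iv); generator (iv) vanishes by relation (iv) of Definition~\ref{D:tensor} combined with graded antisymmetry in $M\t N$; and generator (v) vanishes because it is the graded Jacobi identity for the three elements $m\t n$, $m'\t n'$, $m''\t n''$ in $M\t N$, after using relation (iv) of Definition~\ref{D:tensor} to rewrite each nested bracket. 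Since $\psi\circ\varphi$ and $\varphi\circ\psi$ are the identity on generators, they are mutually inverse isomorphisms.

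The main obstacle will be the well-definedness of the bracket on $Q(M,N)$, particularly verifying that replacing a generator of $D(M,N)$ of type (i) or (ii) in one slot of $[-,-]$ yields an element that is again in $D(M,N)$: this is where the compatibility conditions on the actions of $M$ and $N$ on each other are genuinely used, and the sign bookkeeping is the most delicate part of the argument. The Jacobi verification reducing to (v) is a straightforward but lengthy expansion.
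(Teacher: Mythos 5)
Your proposal is correct and follows essentially the same route as the paper: the paper likewise equips $(M \t_{\mathbb{K}} N)/D(M,N)$ with the bracket $[m \t n, m' \t n'] = -\menosuno{m}{n}(\acts{n}m \t \acts{m'}\!n')$, declares the compatibility checks routine, and concludes that the canonical map $m\t n\mapsto m\t n$ is an isomorphism; your write-up merely makes explicit which relation of $D(M,N)$ is responsible for which axiom. One small repair: over a general commutative ring $\mathbb{K}$ generator (iii) is \emph{not} a special case of generator (iv) (putting $(m',n')=(m,n)$ in (iv) with $|m|=|n|$ produces \emph{twice} generator (iii)); instead $\psi$ kills generator (iii) because $(\acts{n}m)\t(\acts{m}n)=-\menosuno{m}{n}[m\t n, m\t n]=0$ in $M\t N$, the element $m\t n$ being even when $|m|=|n|$.
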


\begin{proof}
There is a Lie superalgebra structure on the supermodule $(M \t_{\mathbb{K}} N)/D(M, N)$ given on generators by the following bracket
\[
[m \t n, m' \t n'] = - \menosuno{m}{n} (^n m \t \acts{m'}\!n'),
\]
for all $m, m' \in  M\scero \cup M\suno$, $n, n' \in N\scero \cup N\suno$ and extended by linearity.
 It is routine to check that this bracket is compatible with the defining relations of $(M \t_{\mathbb{K}} N)/D(M, N)$
  and it indeed  defines a Lie superalgebra structure. Then the canonical homomorphism $M \t N \to (M \t_{\mathbb{K}} N)/D(M, N)$, $m \t n \mapsto m \t n$, is an isomorphism.
\end{proof}

The proof of the following proposition is a routine calculation.
\begin{proposition}\label{P:epim}
Let $M$ and $N$ be two Lie superalgebras acting compatibly on each other.
\begin{itemize}[font=\normalfont]
\item[(i)] The following morphisms
\begin{align*}
\mu &\colon M \t N \to M,  \quad  m \t n \mapsto - \menosuno{m}{n} (\acts{n}m), \\
\nu &\colon M \t N \to N,   \quad \;  m \t n \mapsto \acts{m}n,
\end{align*}
are Lie superalgebra homomorphisms.

\item[(ii)] There are actions of $M$ and $N$ on $M \t N$ given by
\begin{align*}
\acts{m'}(m \t n) &= [m', m] \t n + \menosuno{m}{m'} m \t (^{m'}\!n), \\
\acts{n'}(m \t n) &= (^{n'}\!m) \t n + \menosuno{n}{n'} m \t [n', n],
\end{align*}
for $m, m' \in M\scero \cup M\suno$, $n, n' \in N\scero \cup N\suno$ and extended by linearity.
Moreover, with these actions $\mu$ and $\nu$ are crossed modules of Lie superalgebras.
\end{itemize}
\end{proposition}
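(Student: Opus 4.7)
The plan is to treat the two parts in turn, with part (ii) building on part (i). Both verifications are routine once one identifies which defining relation of $M\otimes N$ or which compatibility identity is needed at each step; the main difficulty lies in careful bookkeeping of Koszul signs.

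For part (i), I would first observe that $\mu$ has even grade on generators, since the action of $N$ on $M$ is even and hence $|\mu(m\otimes n)|=|\acts{n}m|=|n|+|m|=|m\otimes n|$. To see that $\mu$ is a well-defined $\mathbb{K}$-linear map, I would verify that the prescription $m\otimes n\mapsto -\menosuno{m}{n}\,\acts{n}m$ sends each of the four defining relations of Definition~\ref{D:tensor} to zero: relations (i)--(ii) are immediate from bilinearity of the action, and relation (iii), after expansion, becomes the action axioms of $N$ on $M$. Preservation of the bracket, namely that the image of the right-hand side of relation (iv) matches $[\mu(m\otimes n),\mu(m'\otimes n')]$ in $M$, reduces via substitution to the first compatibility identity $\acts{(\acts{n}m)}n'=-\menosuno{m}{n}[\acts{m}n,n']$ applied to evaluate $\acts{(\acts{m'}n')}(\acts{n}m)$. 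The computation for $\nu$ is entirely symmetric and uses the second compatibility identity.

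For part (ii), I would first show well-definedness of the two prospective actions on generators, which again reduces to checking the four relations of Definition~\ref{D:tensor}; the verifications use the action axioms and the graded Jacobi identity in $M$ and $N$ together with the compatibility conditions. The action axioms
\[
\acts{[m',m'']}(m\otimes n)=\acts{m'}\bigl(\acts{m''}(m\otimes n)\bigr)-\menosuno{m'}{m''}\acts{m''}\bigl(\acts{m'}(m\otimes n)\bigr),
\]
and its analogue for $N$, together with the Leibniz rules for these actions with respect to the bracket of $M\otimes N$, then follow from the graded Jacobi identity and the Leibniz rules for the various mutual actions among $M$ and $N$.

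The crossed module identities for $\mu$ unfold as follows. Identity (i) of Definition~\ref{D:crossed}, $\mu(\acts{m'}(m\otimes n))=[m',\mu(m\otimes n)]$, reduces after substitution to a combination of the Leibniz rule $\acts{n}[m',m]=[\acts{n}m',m]+(-1)^{|n||m'|}[m',\acts{n}m]$ for the action of $N$ on $M$ and the second compatibility identity. Identity (ii), $\acts{\mu(m\otimes n)}(m'\otimes n')=[m\otimes n,m'\otimes n']$, expands on the left into a sum of two tensors; relation (iii) of Definition~\ref{D:tensor} together with the first compatibility identity collapses this sum to a single term matching the right-hand side, which is given by relation (iv). The corresponding identities for $\nu$ follow by symmetry. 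The main obstacle throughout is not conceptual but notational: every Koszul sign must be tracked meticulously, as a misplaced factor easily obscures an otherwise direct reduction.
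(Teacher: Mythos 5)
Your proposal is correct and follows essentially the same route as the paper, which simply declares this verification to be ``a routine calculation'': one checks on generators that $\mu$ and $\nu$ kill the defining relations of Definition~\ref{D:tensor}, that bracket preservation for relation (iv) reduces to the compatibility conditions, and that the crossed-module axioms reduce to the Leibniz rules for the mutual actions together with compatibility. One minor bookkeeping slip: to evaluate $\acts{(\acts{m'}n')}(\acts{n}m)$ in the verification for $\mu$ you need the \emph{second} compatibility identity $\acts{(\acts{m}n)}m' = -\menosuno{m}{n}[\acts{n}m, m']$ (an element of $N$ acting on an element of $M$), not the first one you quote; the roles of the two identities for $\mu$ and $\nu$ are swapped relative to your description, but this does not affect the validity of the argument.
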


We will denote by $[M, N]^M$ (resp. $[M, N]^N$) the image of $\mu$ (resp. $\nu$), which by Lemma~\ref{L:crosker}(ii)
 is a graded ideal of $M$ (resp. $N$) generated by the elements of the form ${}^n m$ (resp. ${}^m n$) for $m \in M$
 and $n \in N$. Note that by Lemma~\ref{L:crosker}(iii) $\Ker(\mu)$ \big(resp. $\Ker(\nu)$\big) is an $M / [M, N]^M$-supermodule (resp. $N/[M, N]^N$-supermodule).

\subsection{Some properties of the non-abelian tensor product}

The obvious analogues of Brown and Loday results~\cite{BrLo} hold for Lie superalgebras.
 In the following two propositions immediately below we show that sometimes the non-abelian tensor product of Lie superalgebras
can be expressed in terms of the  tensor product of supermodules.
\begin{proposition}\label{P:ab}
Let $M$ and $N$ be Lie superalgebras acting on each other. Then the canonical map
 $M \otimes_{\mathbb{K}}  N \to M \t N$, $m \t n \mapsto m \t n$, is an even, surjective homomorphism of supermodules.
  In addition, if $M$ and $N$ act trivially on each other, then $M \t N$ is an abelian Lie superalgebra and there is an isomorphism of supermodules
\[
M \t N \cong M^{\ab} \otimes_{\mathbb{K}} N^{\ab},
\]
where $M^{\ab} = M/[M, M]$ and $N^{\ab} = N / [N, N]$.
\end{proposition}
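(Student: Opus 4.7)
My plan is to construct $\phi$ directly and then invoke Proposition~\ref{P:tensoriso} for the isomorphism in the trivial-action case. To begin, I would define $\phi\colon M\otimes_{\mathbb{K}}N\to M\otimes N$ on elementary tensors by $m\otimes n\mapsto m\otimes n$. This is well defined because the defining relations of the $\mathbb{K}$-tensor product of supermodules (bilinearity in each variable) coincide with relations (i) and (ii) of Definition~\ref{D:tensor} and are therefore respected in $M\otimes N$. Evenness is immediate since $|m\otimes n|=|m|+|n|$ on both sides. For surjectivity, I would let $S\subseteq M\otimes N$ be the $\mathbb{K}$-submodule spanned by the symbols $m\otimes n$, and observe that by relation (iv) of Definition~\ref{D:tensor} the bracket of two such generators equals $\pm({}^{n}m\otimes {}^{m'}n')$, which is again a symbol. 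By bilinearity of the Lie bracket this shows $S$ is a Lie subalgebra of $M\otimes N$; since it contains every Lie-algebra generator, $S=M\otimes N$, and $\phi$ is surjective.

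For the second statement I would assume that $M$ and $N$ act trivially on each other. The compatibility conditions then reduce to $0=0$, so Proposition~\ref{P:tensoriso} applies and gives $M\otimes N\cong (M\otimes_{\mathbb{K}}N)/D(M,N)$. Under triviality, every generator of $D(M,N)$ in parts (iii), (iv) and (v) of that proposition vanishes, since each contains a factor of the form ${}^{n}m$ or ${}^{m}n$. The generators in parts (i) and (ii) collapse to $[m,m']\otimes n$ and $m\otimes[n,n']$ respectively, hence
\[
D(M,N)=[M,M]\otimes_{\mathbb{K}}N+M\otimes_{\mathbb{K}}[N,N].
\]
Standard identification of this quotient yields the desired isomorphism $M\otimes N\cong M^{\ab}\otimes_{\mathbb{K}}N^{\ab}$. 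Moreover $M\otimes N$ is abelian because, with trivial actions, relation (iv) reads $[m\otimes n,m'\otimes n']=\pm({}^{n}m\otimes {}^{m'}n')=0$.

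No step poses a serious obstacle. The only point requiring care is the bookkeeping of the five generator families of $D(M,N)$ to confirm that exactly three of them collapse to zero under trivial actions; everything else is direct inspection of the defining relations of the non-abelian tensor product.
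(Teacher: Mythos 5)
Your proof is correct and follows essentially the same route as the paper, which simply observes that the claim is straightforward from relations (iii) and (iv) of Definition~\ref{D:tensor}; your additional detour through Proposition~\ref{P:tensoriso} (noting that trivial actions are automatically compatible and that the generators of $D(M,N)$ collapse to $[M,M]\otimes_{\mathbb{K}}N$ and $M\otimes_{\mathbb{K}}[N,N]$) is just a fleshed-out version of the same inspection. The surjectivity argument via closure of the span of the symbols under the bracket, using relation (iv), is exactly the intended one.
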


\begin{proof}
It is straightforward by the identities (iv), (iii) of Definition~\ref{D:tensor}.
\end{proof}

\begin{proposition}\label{P:augmentation} Let $P$ be a Lie superalgebra and $M$  a $P$-supermodule considered as an abelian Lie superalgebra
acting trivially on $P$. Then there is an isomorphism of supermodules
\[
 P\t M \cong \Omega(P) \t_{\UU(P)} M.
\]
\end{proposition}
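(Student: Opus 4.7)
The plan is to reduce the statement to an isomorphism of supermodules and then build mutually inverse maps via the universal enveloping superalgebra. By Proposition~\ref{P:tensoriso}, $P\t M\cong(P\t_{\mathbb{K}}M)/D(P,M)$, and since $M$ is abelian and the $M$-action on $P$ is trivial, every generator of $D(P,M)$ listed in items (ii)--(v) of that proposition contains either a bracket $[m,m']$ in $M$ or a term $\acts{m}p$, and hence vanishes; only the generator $[p,p']\t m - p\t \acts{p'}\!m + \menosuno{p}{p'}(p'\t \acts{p}m)$ from (i) survives. The bracket of $P\t M$, being proportional to $\acts{m}p$, also vanishes identically, so $P\t M$ is an abelian Lie superalgebra and the claim reduces to an isomorphism of supermodules.

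In one direction I would define $\phi\colon P\t M\to\Omega(P)\t_{\UU(P)}M$ by $\phi(p\t m)=\sigma(p)\t m$. Using the defining identity $\sigma([p,p'])=\sigma(p)\sigma(p')-\menosuno{p}{p'}\sigma(p')\sigma(p)$ of $\UU(P)$ and the identification $\sigma(q)\cdot m=\acts{q}m$ of the $P$-action on $M$ with the induced $\UU(P)$-action, one checks that the surviving relation above is sent to zero, so $\phi$ is well defined. Surjectivity is immediate: $\Omega(P)$ coincides with the right ideal $\sigma(P)\cdot\UU(P)$, and hence every element of $\Omega(P)\t_{\UU(P)}M$ can be rewritten as $\sigma(p)\t m=\phi(p\t m)$.

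For the inverse I would first define an auxiliary $\mathbb{K}$-linear map $\tilde\psi\colon\bigl(\bigoplus_{n\geq 1}P^{\t n}\bigr)\t_{\mathbb{K}}M\to P\t M$ by
\[
p_1\t\cdots\t p_n\t m\longmapsto p_1\t(p_2\cdots p_n\cdot m),
\]
the right-hand factor using the induced $\UU(P)$-action on $M$ (with the convention that the product is empty when $n=1$). The delicate step is to show that $\tilde\psi$ factors through $\Omega(P)=\bigoplus_{n\geq 1}P^{\t n}/J(P)$: for an element of the form $u\cdot r\cdot v$ of the two-sided ideal $J(P)$, with $r$ a generating Lie relation, if the left factor $u$ is empty then the image in $P\t M$ is precisely the surviving relation listed above (and hence zero), while if $u=u_1\t\cdots\t u_l$ is nonempty the image equals $u_1\t(u_2\cdots u_l\,r\,v\cdot m)$, which vanishes because $r$ acts as zero on $m$ through $\UU(P)$. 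Descent from $\Omega(P)\t_{\mathbb{K}}M$ to $\Omega(P)\t_{\UU(P)}M$ is then immediate from the defining formula, and a direct computation yields $\psi\phi=\ide$ and $\phi\psi=\ide$; everything beyond the factorisation through $J(P)$ is bookkeeping with the relations already supplied by Proposition~\ref{P:tensoriso} and by the construction of $\UU(P)$.
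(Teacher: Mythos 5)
Your proposal is correct and follows essentially the same route as the paper: both first use Proposition~\ref{P:tensoriso} to identify $P\t M$ with $(P\t_{\mathbb{K}}M)/W$, where only the relation $[p,p']\t m - p\t \acts{p'}\!m + \menosuno{p}{p'}(p'\t \acts{p}m)$ survives, and then identify this quotient with $\Omega(P)\t_{\UU(P)}M$. The only difference is that the paper delegates the second step to the argument of \cite[Proposition 13]{CaLa}, whereas you carry out the construction of the mutually inverse maps explicitly, and your verification (well-definedness of $\phi$ via the relation \eqref{E:enveloping}, factorisation of $\tilde\psi$ through $J(P)$ by splitting into the cases of empty and nonempty left factor) is sound.
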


\begin{proof}
By Proposition~\ref{P:tensoriso} there is an isomorphism of supermodules
\[
P\t M \cong \frac{P\t_{\mathbb{K}} M}{W},
\]
where $W$ is the submodule of $P\t_{\mathbb{K}} M$ generated by all elements of the form
\[
[p,p']\t m -p\t p'm +\menosuno{p}{p'} p'\t pm
\]
for all $p,p'\in P\scero \cup P\suno$ and $m\in M\scero \cup M\suno$. Now by using Construction~\ref{C:enveloping}
 and by repeating the respective part of the proof of \cite[Proposition 13]{CaLa}, it is easy to see that there is an isomorphism of supermodules
\[
\frac{P\t_{\mathbb{K}} M}{W}\cong \Omega(P) \t_{\UU(P)} M,
\]
which completes the proof.
\end{proof}

The non-abelian tensor product of Lie superalgebras is symmetric, in the sense of the following proposition.
\begin{proposition}
The Lie superalgebra homomorphism
\[
M \t N \to N \t M, \quad m \t n \mapsto - \menosuno{m}{n} (n \t m),
\]
is an isomorphism.
\end{proposition}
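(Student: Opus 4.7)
The plan is to build a two-sided inverse. I would define
\[
\varphi\colon M\t N\longrightarrow N\t M,\quad m\t n\mapsto -\menosuno{m}{n}(n\t m),
\]
on the generators $\xfree$ and, symmetrically,
$\psi\colon N\t M\to M\t N$, $n\t m\mapsto -\menosuno{n}{m}(m\t n)$.
Both formulas are manifestly even of $\mathbb{K}$-bidegree, so the main content is to verify that each map respects the defining relations (i)--(iv) of Definition~\ref{D:tensor}, which shows that they extend uniquely to Lie superalgebra homomorphisms; then a direct inspection yields $\psi\circ\varphi=\ide_{M\t N}$ and $\varphi\circ\psi=\ide_{N\t M}$.

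For the well-definedness check, relations (i) and (ii) are immediate because the sign $\menosuno{m}{n}$ depends only on the $\mathbb{Z}_2$-degrees, which are constant on each term of a bihomogeneous sum and on $\lambda m\t n$, $m\t\lambda n$. Relation (iii) in $M\t N$ is carried by $\varphi$ to the two identities of (iii) for $N\t M$ read with the roles of $M$ and $N$ swapped, after collecting the signs $-\menosuno{\cdot}{\cdot}$ on each term. Relation (iv) is the only delicate point: applying $\varphi$ to $[m\t n,m'\t n']=-\menosuno{m}{n}(\acts{n}m\t \acts{m'}\!n')$ yields, on one side, the bracket $[\varphi(m\t n),\varphi(m'\t n')]$ evaluated with (iv) in $N\t M$, and on the other side the image of a single generator; matching them reduces, after cancelling common sign factors, precisely to the graded antisymmetry $[x,y]=-\menosuno{x}{y}[y,x]$ in the Lie superalgebra $N\t M$.

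Once $\varphi$ and $\psi$ are shown to be Lie superalgebra homomorphisms, the fact that they are mutually inverse follows from the identity
\[
\psi\bigl(\varphi(m\t n)\bigr)=\psi\bigl(-\menosuno{m}{n}(n\t m)\bigr)=\bigl(-\menosuno{m}{n}\bigr)\bigl(-\menosuno{n}{m}\bigr)(m\t n)=m\t n,
\]
and likewise $\varphi\psi=\ide$, since $\menosuno{m}{n}\menosuno{n}{m}=1$.

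The main obstacle is purely the sign bookkeeping in relation (iv): the sign $-\menosuno{m}{n}$ picked up by $\varphi$ interacts with the signs $-\menosuno{m}{n}$ and $-\menosuno{n}{m}$ appearing in (iv) for $M\t N$ and for $N\t M$, and one has to reconcile them using graded antisymmetry. All other verifications, including (i)--(iii) and the composition computation, reduce to straightforward sign arithmetic.
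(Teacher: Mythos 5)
Your plan is correct and is precisely the routine verification the paper leaves to the reader (its proof is just ``This can be checked readily''): define the map and its symmetric candidate inverse on generators, check compatibility with relations (i)--(iv), and observe $\menosuno{m}{n}\menosuno{n}{m}=1$. You also correctly identify the only nontrivial point, namely that matching the two sides of relation (iv) requires the graded antisymmetry of the bracket in $N\t N$'s counterpart $N\t M$ (equivalently, relation (iv) of Proposition~\ref{P:tensoriso} with the roles of $M$ and $N$ swapped).
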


\begin{proof}
This can be checked readily.
\end{proof}

Let us consider the category $\slie^2$ whose objects are ordered pairs of Lie superalgebras $(M, N)$ acting compatibly on each other,
and the morphisms are pairs of Lie superalgebra homomorphisms $(\phi \colon M \to M',\psi \colon N \to N')$ which preserve the actions,
i.e., $\phi(^n m) = \acts{\psi(n)}\phi(m)$ and $\psi(^m n) = \acts{\phi(m)}\psi(n)$. For such a pair $(\phi, \psi)$
we have a homomorphism of Lie superalgebras $\phi \t \psi \colon M \t N \to M' \t N'$,  $m \t n \mapsto \phi(m) \t \psi(n)$.
Therefore, $\otimes$ is a functor from $\slie^2$ to the category of Lie superalgebras.

Given an exact sequence in $\slie^2$
\begin{equation}\label{E:seq}
(0, 0) \xrightarrow{ \  \; \ } (K, L) \xrightarrow{(i, j)}  (M, N) \xrightarrow{(\phi, \psi)} (P, Q) \xrightarrow{ \  \; \ } (0, 0),
\end{equation}
by Proposition~\ref{P:epim}(ii) there is a Lie superalgebra homomorphism $M \t L \to L$ and an action of $N$ on $K \t N$.
Thus, there is an action of $M \t L$ on $K \t N$, so we can form the semidirect product $(K \t N) \rtimes (M \t L)$,
 and we have the following obvious analogue of \cite[Proposition 9]{Ell1}.

\begin{proposition}\label{P:tenfunc}
Given  the short exact sequence \eqref{E:seq}, there is an exact sequence of Lie superalgebras
\[
(K \t N) \rtimes (M \t L) \xrightarrow{ \ \alpha \ }  M \t N \xrightarrow{\phi \t \psi}  P \t Q \xrightarrow{ \  \; \ } 0.
\]
\end{proposition}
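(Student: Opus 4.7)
The plan is to show the sequence is exact at two spots: surjectivity of $\phi\t\psi$ and the identification $\Ker(\phi\t\psi)=\Ima(\alpha)$. First I would write down $\alpha$ explicitly on the two factors of the semidirect product: on $K\t N$ send $k\t n\mapsto i(k)\t n$ via the functoriality of $\t$ applied to the pair $(i,\ide_N)$, and on $M\t L$ send $m\t l\mapsto m\t j(l)$ via $(\ide_M,j)$. A direct check using Proposition~\ref{P:epim}(ii) and the explicit form of the cross-action of $M\t L$ on $K\t N$ (through $\nu\colon M\t L\to L\hookrightarrow N$) shows that $\alpha$ respects the semidirect-product bracket, hence is a Lie superalgebra homomorphism.

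Surjectivity of $\phi\t\psi$ is essentially free: every generator $p\t q$ of $P\t Q$ lifts to $m\t n$ for any choice $\phi(m)=p$, $\psi(n)=q$, so the image contains a generating set. The inclusion $\Ima(\alpha)\subseteq\Ker(\phi\t\psi)$ is also immediate from $\phi\circ i=0$ and $\psi\circ j=0$, which force $(\phi\t\psi)(i(k)\t n)=0$ and $(\phi\t\psi)(m\t j(l))=0$.

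The only substantive step is the reverse inclusion $\Ker(\phi\t\psi)\subseteq\Ima(\alpha)$, which I would establish by building an inverse to the induced surjection $(M\t N)/\Ima(\alpha)\twoheadrightarrow P\t Q$. On generators, the candidate inverse sends $p\t q$ to the class of $m\t n$, where $m,n$ are any lifts. Well-definedness modulo $\Ima(\alpha)$ reduces to the observation that two lifts of $p$ (resp.\ $q$) differ by an element of $K$ (resp.\ $L$), and the corresponding difference of tensors lies in the image of $\alpha$ by bilinearity (relation (ii) of Definition~\ref{D:tensor}). To promote this to a Lie superalgebra homomorphism $P\t Q\to(M\t N)/\Ima(\alpha)$, one verifies that each of the four defining relations of Definition~\ref{D:tensor} holds after lifting. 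Here one uses crucially that $(\phi,\psi)$ is a morphism in $\slie^2$, so $\phi({}^{n}m)={}^{\psi(n)}\phi(m)$ and $\psi({}^{m}n)={}^{\phi(m)}\psi(n)$; consequently, a lift of $\acts{p'}q$ can be taken to be $\acts{m'}n$, and relations (iii) and (iv) lift from $P\t Q$ to identities modulo the submodule spanned by elements of the forms $i(K)\t N$ and $M\t j(L)$, which is exactly $\Ima(\alpha)$.

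The main obstacle is the last verification: checking that all four relations of the tensor product are respected by the lifting procedure. This is routine but must be done carefully, with attention to the Koszul signs and to the fact that differences of lifts contribute terms that, while nonzero in $M\t N$, vanish in the quotient by $\Ima(\alpha)$ precisely thanks to the compatibility of the $(\phi,\psi)$-actions and relations (i)--(ii) of Definition~\ref{D:tensor}. Once this is in hand, the constructed map is a two-sided inverse on generators of the canonical map $(M\t N)/\Ima(\alpha)\to P\t Q$, yielding the desired exactness.
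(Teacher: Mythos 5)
Your plan is correct and is essentially the standard argument behind this statement: the paper itself offers no proof, presenting the proposition as the ``obvious analogue'' of Ellis's Proposition~9 for Lie algebras, and your outline (functoriality of $\t$ for $(i,\ide_N)$ and $(\ide_M,j)$, surjectivity on generators, and the lifting construction of an inverse $P\t Q\to (M\t N)/\Ima(\alpha)$) is precisely that argument transported to the super setting. The only point worth making explicit in a write-up is that $\Ima(\alpha)$ is a graded \emph{ideal} of $M\t N$ (so that the quotient is a Lie superalgebra and relation (iv) can be checked modulo it), which follows from the inclusions $\acts{K}N\subseteq L$ and $\acts{L}M\subseteq K$ forced by $(\phi,\psi)$ being a morphism in $\slie^2$ --- exactly the compatibilities you already invoke.
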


In particular, given a Lie superalgebra $M$ and a graded ideal $K$ of $M$,  there is an exact sequences of Lie superalgebras
\begin{equation}\label{C:exact}
(K \t M) \rtimes (M \t K) \to M \t M \to (M / K) \t (M / K) \to 0.
\end{equation}

\subsection{Nilpotency, solvability and Engel of the non-abelian tensor product}

The results from \cite{STME} on nilpotency, solvability and Engel  of the non-abelian tensor product on Lie algebras can be easily extended to the case of Lie superalgebras.
 The notions of nilpotency and solvability  of Lie superalgebras are given in \cite{Mus}.  As they are very similar to the respective notions for Lie algebras, we omit them.
We say that a Lie superalgebra $M$  is  \emph{$n$-Engel }if it satisfies  $\ad(x)^n=0$ for all $x \in M$.
The proof of the following result is similar to the proof of \cite[Theorem 2.2]{STME}.

\begin{theorem}\label{Th_nil}
Let $M$ and $N$ be two Lie superalgebras acting compatibly on each other. Then,
\begin{enumerate}[font=\normalfont]
\item[(i)] If $[M, N]^M$ is nilpotent, then $M \t N$ and $[M, N]^N$ are nilpotent too. Moreover, if the nilpotency class of $[M, N]^M$ is $\class([M, N]^M)$, then
\begin{align*}
\class([M, N]^M) \leq \class(M \t N) \leq \class([M, N]^M) + 1, \\
\class([M, N]^N) \leq \class([M, N]^M) + 1.
\end{align*}

\item[(ii)] If $[M, N]^M$ is solvable, then $M \t N$ and $[M, N]^N$ are solvable too. Moreover, if the derived length of $[M, N]^M$ is $\ell([M, N]^M)$, then
\begin{align*}
\ell([M, N]^M) \leq \ell(M \t N) \leq \ell([M, N]^M) + 1, \\
\ell([M, N]^N) \leq \ell([M, N]^M) + 1.
\end{align*}

\item[(iii)] If $[M, N]^M$ is Engel, then $M \t N$ and $[M, N]^N$ are Engel too. Moreover, if $[M, N]^M$ is $n$-Engel, then $M \t N$ and $[M, N]^N$ are $(n+1)$-Engel.
\end{enumerate}
\end{theorem}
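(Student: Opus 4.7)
The plan is to derive all three parts from a single structural observation: by Proposition~\ref{P:epim}(ii), $\mu\colon M\t N\to M$ is a crossed module of Lie superalgebras with image $[M,N]^M$, so Lemma~\ref{L:crosker}(i) produces a central extension
\[
0\longrightarrow \Ker\mu\longrightarrow M\t N\xrightarrow{\ \mu\ } [M,N]^M\longrightarrow 0,
\]
that is, $\Ker\mu\subseteq \Zenter(M\t N)$. Simultaneously, $\nu\colon M\t N\twoheadrightarrow [M,N]^N$ is a surjection of Lie superalgebras. Every inequality in the theorem will follow by combining these two facts with the standard principle that quotients preserve each of the three properties (nilpotency, solvability, Engel) with non-increasing invariants, while a central extension raises each relevant invariant by at most one.

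For (i), I would first note that surjectivity of $\mu$ and $\nu$ forces $\class([M,N]^M),\,\class([M,N]^N)\leq\class(M\t N)$. For the upper bound I would verify, by induction in the $\mathbb{Z}_2$-graded setting, that if $E$ is a central extension of a nilpotent Lie superalgebra $Q$ of class $c$, then $E$ has class at most $c+1$; applying this to $\mu$ yields $\class(M\t N)\leq\class([M,N]^M)+1$, which combined with the previous inequalities gives both displayed chains and, in particular, that $M\t N$ and $[M,N]^N$ are nilpotent. Part (ii) is formally identical but uses the derived series in place of the lower central series: the key auxiliary statement is that a central extension of a solvable Lie superalgebra of derived length $\ell$ has derived length at most $\ell+1$, which I would prove by the same graded-Jacobi bookkeeping used for nilpotency.

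For (iii), if $[M,N]^M$ is $n$-Engel, then for any homogeneous $z\in M\t N$ the map $\ad(z)^{n}$ sends $M\t N$ into $\Ker\mu\subseteq \Zenter(M\t N)$, where one further application of $\ad(z)$ must vanish; hence $\ad(z)^{n+1}=0$ on $M\t N$, and the $(n+1)$-Engel property descends to the quotient $[M,N]^N$ via $\nu$. The main obstacle, and essentially the only place where the super setting demands genuine care, is tracking the signs coming from the graded antisymmetry and Jacobi identity in the proofs of the two central-extension lemmas invoked in (i) and (ii): one must check on homogeneous iterated brackets that the signed cancellations work out exactly as in \cite{STME}. Once those signed analogues are in place, the rest of the argument is purely formal.
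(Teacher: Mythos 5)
Your proposal is correct and follows essentially the same route as the paper, which simply defers to the Lie-algebra argument of \cite[Theorem 2.2]{STME}: the whole theorem reduces to the fact that $\mu\colon M\t N\twoheadrightarrow [M,N]^M$ is a surjection with central kernel (Proposition~\ref{P:epim} plus Lemma~\ref{L:crosker}(i)), combined with the standard bounds for central extensions and quotients. Your writeup in fact supplies more detail than the paper does; the only point worth tightening is that the Engel condition is required for all $z\in M\t N$, not just homogeneous $z$, but your identity $\mu\bigl(\ad(z)^n(w)\bigr)=\ad(\mu(z))^n(\mu(w))=0$ holds for arbitrary $z$ and $w$, so the argument goes through unchanged.
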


\section{Universal central extensions of Lie superalgebras}

Now we use the non-abelian tensor product of Lie superalgebras to describe universal central extensions of Lie superalgebras.
Recall that a central extension $\mathfrak{u} \colon U \twoheadrightarrow P$ is universal if for any other central extension
$f \colon M \twoheadrightarrow P$ there is a unique homomorphism $\theta \colon U \to M$ such that
$f \circ \theta = \mathfrak{u}$. It is shown in \cite{Neh} that a Lie superalgebra $P$ admits a universal central extension
if and only if $P$ is perfect, i.e. $P = [P, P]$.

It follows from Proposition~\ref{P:epim} and Lemma~\ref{L:crosker}(i) that the homomorphism
 $\mathfrak{u} \colon P \t P \twoheadrightarrow [P, P]$, $\mathfrak{u}(p \t p') = [p, p']$, is a central extension of the Lie superalgebra $[P, P]$.

\begin{theorem}\label{T:uce}
If $P$ is a perfect Lie superalgebra, then the central extension $\mathfrak{u} \colon P \t P \twoheadrightarrow P$ is the universal central extension.
\end{theorem}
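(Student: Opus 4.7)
The plan has two parts: first, show $P\t P$ is itself perfect, which will take care of uniqueness; second, construct the comparison morphism $\theta$ from any central extension $f\colon M\twoheadrightarrow P$ using a set-theoretic section preserving the $\mathbb{Z}_2$-grading, exploiting centrality of $\Ker f$ to absorb all non-linearities.

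For perfectness of $P\t P$, apply the defining relation (iv) of Definition~\ref{D:tensor} with $M=N=P$ and their adjoint actions on one another: for homogeneous $x,y,z,w\in P$,
\[
[\,x\t y,\ z\t w\,]=-\menosuno{x}{y}\bigl([y,x]\t [z,w]\bigr).
\]
Since $P=[P,P]$, every homogeneous element of $P$ is a sum of brackets, so any generator $p\t p'$ can be rewritten as a sum of expressions of the form $[y,x]\t [z,w]$, each of which lies in $[P\t P,P\t P]$ by the identity above. Hence $P\t P=[P\t P,P\t P]$.

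For existence, pick a set-theoretic section $s\colon P\to M$ of $f$ with $s(P_{\bar\alpha})\subseteq M_{\bar\alpha}$ (possible since $f$ is even and surjective) and define $\theta$ on the generating set $X_{P,P}$ by $\theta(p\t p')=[s(p),s(p')]$. The decisive observation is that, for all homogeneous $p,q\in P$ and $\lambda\in\mathbb{K}$, the three quantities
\[
s[p,q]-[s(p),s(q)],\qquad s(p+q)-s(p)-s(q),\qquad s(\lambda p)-\lambda s(p)
\]
all lie in $\Ker f\subseteq\Zenter(M)$; bracketing anything in $M$ against a central element returns $0$. Consequently, relations (i) and (ii) of Definition~\ref{D:tensor} are preserved by $\theta$, relation (iii) collapses to the graded Jacobi identity in $M$ after each factor of the form $s[\cdot,\cdot]$ is replaced by $[s(\cdot),s(\cdot)]$ inside an outer bracket, and relation (iv) becomes a tautology after a single application of graded antisymmetry in $M$. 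Thus $\theta$ extends to a Lie superalgebra homomorphism, and $f\circ\theta=\mathfrak{u}$ follows immediately from $f\circ s=\ide_P$.

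For uniqueness, let $\theta'$ be another lift and set $g=\theta-\theta'$, which takes values in $\Ker f\subseteq\Zenter(M)$. For any bracket $[u,v]\in P\t P$,
\[
\theta[u,v]-\theta'[u,v]=[\theta(u),\theta(v)]-[\theta(u)-g(u),\theta(v)-g(v)]=0,
\]
because $g(u)$ and $g(v)$ are central in $M$. Perfectness of $P\t P$ then forces $\theta=\theta'$. The main technical obstacle I anticipate is the sign bookkeeping in verifying relations (iii) and (iv) under $\theta$: each computation has to systematically invoke the centrality of $\Ker f$ to discard every stray central term before the graded Jacobi identity or the graded antisymmetry can be brought to bear.
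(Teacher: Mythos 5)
Your proof is correct and follows essentially the same route as the paper: define $\theta(p\t p')=[m_p,m_{p'}]$ on preimages (your section $s$ is just a way of packaging that choice), use centrality of $\Ker f$ for well-definedness, deduce perfectness of $P\t P$ from relation (iv), and conclude uniqueness from perfectness. The only cosmetic difference is that you write out the uniqueness argument explicitly, where the paper delegates it to \cite[Lemma 1.4]{Neh}.
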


\begin{proof}
Let $f \colon M \twoheadrightarrow P$ be a central extension of $P$. Since $\Ker f$ is in the centre of $M$,
we get a well-defined homomorphism of Lie superalgebras $\theta \colon P \t P \to M$ given by $\theta(p \t p') = [m_p, m_{p'}]$,
where $m_p$ and $m_{p'}$ are any preimages of $p$ and $p'$, respectively. Obviously $\theta \circ f = \mathfrak{u}$.
Since $P$ is perfect, then by relation (iv) of Definition~\ref{D:tensor}, so is $P \t P$. Then by \cite[Lemma 1.4]{Neh}
the homomorphism $\theta$ is unique.
\end{proof}

\begin{remark}
If $P$ is a perfect Lie superalgebra, then $\Ho_2(P) \approx \Ker (P \t P \overset{\mathfrak{u}}{\longrightarrow} P)$,
 since the kernel of the universal central extension is isomorphic to the second homology $\Ho_2(P)$ (see \cite{Neh}).
\end{remark}

It is a classical result that the universal central extension of the Lie algebra $\mathfrak{sl}(n, A)$, where $A$ is a unital associative algebra,
is the Steinberg algebra $\mathfrak{st}(n, A)$, when $n\geq 5$  (see e. g. \cite{KaLo}).
Recently, in \cite{ChSu, GaLa}, this result has been extended  to Lie superalgebras.
Below, using the non-abelian tensor product of Lie superalgebras, we propose an alternative proof of the same result.

First we recall from \cite{ChSu} that, given a unital associative superalgebra $A$, the \emph{Lie superalgebra} $\mathfrak{sl}(m, n, A)$, $m+n\geq 3$,
is defined to be the subalgebra of the Lie superalgebra $\mathfrak{gl}(m, n, A)$ \big(see Example~\ref{E:sup} (ii)\big)
generated by the elements $E_{ij}(a)$, $1\leq i\neq j\leq m+n$, $a\in A\scero \cup A\suno$.
 It is shown in \cite[Lemma 3.3]{ChSu} that $\mathfrak{sl}(m, n, A)$ is a perfect Lie superalgebra.
  This guarantees the existence of the universal central extension of  $\mathfrak{sl}(m, n, A)$.

\emph{The Steinberg Lie superalgebra} $\mathfrak{st}(m, n, A)$ is defined for $m+n\geq 3$ to be the Lie superalgebra
generated by the homogeneous elements $F_{ij}(a)$, where $1 \leq i \neq j \leq m+n$, $a \in A$ is a homogeneous element
and the $\mathbb{Z}_2$-grading is given by $|F_{ij}(a)|=|i|+|j|+|a|$, subject to the following relations:
\begin{align*}
&a \mapsto F_{ij}(a) \text{ is a }\mathbb{K}\text{-linear map,}\\
&[F_{ij}(a), F_{jk}(b)] = F_{ik}(ab), \text{ for distinct }i, j, k,\\
&[F_{ij}(a), F_{kl}(b)] \, = 0, \text{ for } j \neq k, i \neq l.
\end{align*}

 \begin{theorem}[\cite{ChSu}]
 If $m + n \geq 5$, then  the canonical epimorphism
 \[
 \mathfrak{st}(m, n, A)\twoheadrightarrow \mathfrak{sl}(m, n, A), \quad  F_{ij}(a)\mapsto E_{ij}(a),
 \]
  is the universal central extension of the perfect Lie superalgebra  $\mathfrak{sl}(m, n, A)$.
 \end{theorem}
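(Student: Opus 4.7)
The strategy is to reduce the claim to Theorem~\ref{T:uce}. Since $\mathfrak{sl}(m,n,A)$ is perfect by \cite[Lemma~3.3]{ChSu}, Theorem~\ref{T:uce} guarantees that
\[
\mathfrak{u}\colon \mathfrak{sl}(m,n,A)\otimes \mathfrak{sl}(m,n,A) \twoheadrightarrow \mathfrak{sl}(m,n,A),\qquad p\otimes p'\mapsto [p,p'],
\]
is \emph{the} universal central extension. It therefore suffices to exhibit an isomorphism of central extensions between $\mathfrak{sl}(m,n,A)\otimes \mathfrak{sl}(m,n,A)$ and $\mathfrak{st}(m,n,A)$ over $\mathfrak{sl}(m,n,A)$. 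Two preliminary ingredients are needed: the canonical surjection $\pi\colon \mathfrak{st}(m,n,A)\twoheadrightarrow \mathfrak{sl}(m,n,A)$ is itself a central extension (standard for $m+n\geq 5$, see \cite{ChSu}), and $\mathfrak{st}(m,n,A)$ is perfect (immediate from $F_{ij}(a)=[F_{ik}(1),F_{kj}(a)]$ for any $k\neq i,j$). Granting these, universality of $\mathfrak{u}$ produces a unique homomorphism $\theta\colon \mathfrak{sl}(m,n,A)\otimes \mathfrak{sl}(m,n,A)\to \mathfrak{st}(m,n,A)$ over $\mathfrak{sl}(m,n,A)$, satisfying $\theta(E_{ij}(a)\otimes E_{kl}(b))=[F_{ij}(a),F_{kl}(b)]$ in $\mathfrak{st}$.

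For the inverse, I would define $\phi\colon \mathfrak{st}(m,n,A)\to \mathfrak{sl}(m,n,A)\otimes \mathfrak{sl}(m,n,A)$ on generators by
\[
\phi(F_{ij}(a))=E_{ik}(1)\otimes E_{kj}(a),
\]
where $k$ is any index distinct from $i$ and $j$. Independence of the auxiliary index $k$ follows by writing $E_{ik}(1)=[E_{il}(1),E_{lk}(1)]$ for a further index $l\notin\{i,j,k\}$ (available since $m+n\geq 5$) and applying relation~(iii) of Definition~\ref{D:tensor}, using that ${}^{E_{il}(1)}E_{kj}(a)=[E_{il}(1),E_{kj}(a)]=0$ when $l\neq k$ and $i\neq j$. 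Compatibility with the three Steinberg defining relations is then verified by unfolding brackets through relation~(iv) of Definition~\ref{D:tensor} and employing the compatibility of the mutual (adjoint) actions of $\mathfrak{sl}(m,n,A)$ on itself. A direct calculation using the multiplication of matrix units yields $\mathfrak{u}\circ\phi=\pi$.

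The main obstacle is the combinatorial step of showing that $\phi$ respects the Steinberg relations: one has to track the signs $(-1)^{|i||j|}$ arising from the $\mathbb{Z}_2$-parities of the matrix positions and reconcile them with the signs prescribed by Definition~\ref{D:tensor}, using at each stage at least one (and occasionally two) auxiliary indices distinct from those already fixed, which is exactly what the hypothesis $m+n\geq 5$ supplies. Once $\phi$ is established as a Lie superalgebra homomorphism with $\mathfrak{u}\circ\phi=\pi$, both $\phi\circ\theta$ and $\theta\circ\phi$ are endomorphisms over the central extensions $\mathfrak{u}$ and $\pi$ of the perfect Lie superalgebras $\mathfrak{sl}(m,n,A)\otimes \mathfrak{sl}(m,n,A)$ and $\mathfrak{st}(m,n,A)$ respectively; the uniqueness clause of \cite[Lemma~1.4]{Neh} forces both compositions to equal the identity, completing the proof.
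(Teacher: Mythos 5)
Your proof is correct, and it closes the argument along a genuinely different path from the paper's. The paper applies Theorem~\ref{T:uce} to the perfect Lie superalgebra $\mathfrak{st}(m,n,A)$ itself: it exhibits mutually inverse homomorphisms $F_{ij}(a)\mapsto F_{ik}(a)\t F_{kj}(1)$ and $F_{ij}(a)\t F_{kl}(b)\mapsto [F_{ij}(a),F_{kl}(b)]$ between $\mathfrak{st}(m,n,A)$ and $\mathfrak{st}(m,n,A)\t\mathfrak{st}(m,n,A)$, concludes that $\mathfrak{st}(m,n,A)$ is its own universal central extension (centrally closed), and then cites \cite[Corollary~1.9]{Neh} to transfer universality down to $\mathfrak{st}(m,n,A)\twoheadrightarrow\mathfrak{sl}(m,n,A)$. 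You instead apply Theorem~\ref{T:uce} to $\mathfrak{sl}(m,n,A)$ and identify $\mathfrak{st}(m,n,A)$ with the explicit model $\mathfrak{sl}(m,n,A)\t\mathfrak{sl}(m,n,A)$ of its universal central extension, constructing $\phi$ by hand and $\theta$ from the universal property, with \cite[Lemma~1.4]{Neh} forcing the two composites to be identities. The computational core is the same in both versions: a generator-level map involving an auxiliary index $k$, whose independence of $k$ and compatibility with the Steinberg relations (in particular the vanishing of $E_{ij}(a)\t E_{kl}(b)$ for $j\neq k$, $i\neq l$, obtained by inserting a fifth index) is exactly where $m+n\geq 5$ enters; and both versions take as input the preliminary fact from \cite{ChSu} that the kernel of $\mathfrak{st}(m,n,A)\twoheadrightarrow\mathfrak{sl}(m,n,A)$ is central, which you state explicitly and the paper leaves implicit in its appeal to \cite[Corollary~1.9]{Neh}. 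The paper's arrangement has the mild advantage that all relation-checking happens inside $\mathfrak{st}(m,n,A)\t\mathfrak{st}(m,n,A)$, where both tensor factors are presented by Steinberg generators and relations; yours avoids the detour through central closedness and Corollary~1.9, relying only on the uniqueness lemma already invoked in the proof of Theorem~\ref{T:uce}. Neither route has a gap beyond the routine verifications that both defer.
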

 \begin{proof}
 We claim that there is an isomorphism of Lie superalgebras
\[
\mathfrak{st}(m, n, A)\cong \mathfrak{st}(m, n, A) \t \mathfrak{st}(m, n, A).
\]
Indeed, one can readily check that the maps
\begin{align*}
 & \mathfrak{st}(m, n, A)\longrightarrow \mathfrak{st}(m, n, A) \t \mathfrak{st}(m, n, A), \quad F_{ij}(a)\mapsto F_{ik}(a)\t F_{kj}(1) \ \text{for} \ k\neq i,j,\\
 &\mathfrak{st}(m, n, A) \t \mathfrak{st}(m, n, A)\longrightarrow \mathfrak{st}(m, n, A), \quad  F_{ij}(a)\t F_{kl}(b)\mapsto [F_{ij}(a),  F_{kl}(b)],
 \end{align*}
are well-defined homomorphisms of Lie superalgebras if $m + n \geq 5$, and they are inverses to each other.
Since $\mathfrak{st}(m, n, A)$ is a perfect Lie superalgebra, then Theorem~\ref{T:uce} and \cite[Corollary 1.9]{Neh} complete the proof.
 \end{proof}

\section{Non-abelian homology of Lie superalgebras}\label{S:nahom}

The low-dimensional non-abelian homology of Lie algebras with coefficients in crossed modules was defined in \cite{Gui}
and it was extended to all dimensions in \cite{InKhLa}. In this section we extend to Lie superalgebras
the construction of zero and first non-abelian homologies. We also relate the non-abelian homology of Lie superalgebras
with the cyclic homology of associative superalgebras studied in \cite{IoKo, Kas}.

\subsection{Construction of the non-abelian homology and some properties}

Let $P$ be a Lie superalgebra. We denote by $\cross(P)$ the category of crossed modules of Lie superalgebras over $P$ (crossed $P$-modules, for short),
whose objects are crossed modules $(M, \partial) \equiv (\partial \colon M \to P)$ and a morphism from $(M, \partial)$ to $(N, \partial')$
is a Lie superalgebra homomorphism $f \colon M \to N$ such that $f(^pm) = \mbox{}^pf(m)$ for all $p \in P$, $m \in M$
and $\partial' \circ f = \partial$. By an  exact sequence
$ (L, \partial'') \xrightarrow{ \; f \; } (M, \partial) \xrightarrow{ \; g \; } (N, \partial') $ in $\cross(P)$
we mean that the sequence of Lie superalgebras  $ L \xrightarrow{ \; f \; }  M \xrightarrow{ \; g \; } N $ is exact.

\begin{lemma}
Given a short exact sequence in $\cross(P)$
\[
0 \to (L, \partial'') \xrightarrow{ \; f \; } (M, \partial) \xrightarrow{ \; g \; } (N, \partial') \to 0,
\]
the morphism $\partial'' \colon L \to P$ is trivial and $L$ is an abelian Lie superalgebra.
\end{lemma}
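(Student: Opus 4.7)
The plan is to exploit two facts: a morphism $(L,\partial'')\to(M,\partial)$ in $\cross(P)$ is by definition compatible with the structure maps into $P$, so $\partial\circ f=\partial''$ and $\partial'\circ g=\partial$; and, by Lemma~\ref{L:crosker}(i), the kernel of any crossed $P$-module map is contained in the centre of the source.

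First I would establish that $\partial''$ is the zero morphism. For any $\ell\in L$ the compatibility conditions give
\[
\partial''(\ell)=\partial(f(\ell))=\partial'(g(f(\ell))),
\]
and exactness at $M$ forces $g\circ f=0$, whence $\partial''(\ell)=\partial'(0)=0$. Thus $\partial''$ vanishes identically.

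Next, to see that $L$ is abelian, I would apply Lemma~\ref{L:crosker}(i) to the crossed module $(M,\partial)$, which yields $\Ker\partial\subseteq\Zenter(M)$. From the previous step $\partial\circ f=\partial''=0$, so $f(L)\subseteq\Ker\partial\subseteq\Zenter(M)$. Therefore $[f(\ell),f(\ell')]=0$ for all $\ell,\ell'\in L$, and since $f$ is a Lie superalgebra homomorphism this reads $f([\ell,\ell'])=0$; the injectivity of $f$, which comes from exactness at $L$, then gives $[\ell,\ell']=0$, proving that $L$ is abelian.

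There is essentially no obstacle: the whole argument is a short diagram chase using only the definition of $\cross(P)$, exactness of the sequence, and the central-kernel property of crossed modules already recorded as Lemma~\ref{L:crosker}(i). The only point requiring mild care is the logical order, since the abelianness argument needs the triviality of $\partial''$ already in hand in order to place $f(L)$ inside $\Ker\partial$.
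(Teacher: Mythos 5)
Your proof is correct. The first half, $\partial''=\partial\circ f=\partial'\circ g\circ f=0$, is exactly the paper's argument. For the abelianness of $L$ you take a genuinely different (and slightly longer) route: you place $f(L)$ inside $\Ker\partial\subseteq\Zenter(M)$ via Lemma~\ref{L:crosker}(i) and then pull the vanishing of brackets back along the injective map $f$. The paper instead applies the Peiffer identity, condition (ii) of Definition~\ref{D:crossed}, directly to the crossed module $(L,\partial'')$ itself: $[l,l']=\acts{\partial''(l)}l'=\acts{0}l'=0$. The paper's version is more economical --- it needs neither the injectivity of $f$ nor Lemma~\ref{L:crosker} --- and it isolates the more general fact that any crossed $P$-module with zero structure map is automatically abelian; your version uses the extra hypothesis of exactness at $L$, which is available, so nothing is lost, but nothing is gained either. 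Both arguments are sound.
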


\begin{proof}
Clearly $\partial'' = \partial' \circ g \circ f = 0$ and $[l, l'] = \acts{\partial''(l)}l' = 0$, for all $l,l'\in L$.
\end{proof}

If $(M, \partial)$ and $(N, \partial')$ are two crossed $P$-modules, then the Lie superalgebras $M$ and $N$
act compatibly on each other via the action of $P$. Thus, we can construct the non-abelian tensor product of Lie superalgebras $M \t N$.
Moreover, we have an action of $P$ on $M \t N$ defined by $\acts{p}(m \t n) = \acts{p}m \t n + \menosuno{p}{m} m \t \acts{p} n$,
and straightforward computations show that $\eta \colon M \t N \to P$, $m \t n \mapsto [\partial(m), \partial'(n)]$, is a crossed $P$-module.

\begin{proposition}\label{P:exact}
Let $(M, \partial)$ be a crossed $P$-module. There is a right exact functor $(M \t -) \colon \cross(P) \to \cross(P)$ given, for any crossed $P$-module $(N, \partial')$, by
\[
(M \t -)(N, \partial') = (M \t N, \eta).
\]
\end{proposition}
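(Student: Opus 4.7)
The statement has two ingredients: (i) that $M \otimes -$ is a well-defined functor on $\cross(P)$, and (ii) that it is right exact, meaning that a short exact sequence $0 \to (L,\partial'') \to (N,\partial') \to (N'',\partial''') \to 0$ in $\cross(P)$ induces an exact sequence $M \otimes L \to M \otimes N \to M \otimes N'' \to 0$ of crossed $P$-modules. The key engine has already been provided: Proposition~\ref{P:epim} gives the crossed $P$-module structure on $M \otimes N$ via $\eta$, and Proposition~\ref{P:tenfunc} gives a right exact sequence at the level of Lie superalgebras from short exact sequences in $\slie^2$. The plan is simply to apply these two results and verify the compatibility with $\cross(P)$-structure.

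Functoriality: for a morphism $f \colon (N,\partial') \to (N'',\partial''')$ in $\cross(P)$, I will show that $(1_M, f) \colon (M,N) \to (M,N'')$ is a morphism in $\slie^2$. The equality $1_M(\acts{n}m) = \acts{f(n)}m$ reduces to $\acts{\partial'(n)}m = \acts{\partial'''(f(n))}m$, which holds because $\partial''' \circ f = \partial'$ and the actions of $N$ and $N''$ on $M$ both factor through $P$ via the $\partial$'s. The identity $f(\acts{m}n) = \acts{m}f(n)$ likewise reduces to $P$-equivariance of $f$, which is part of the definition of a morphism in $\cross(P)$. The induced Lie superalgebra homomorphism $1_M \otimes f$ is then checked to be a morphism in $\cross(P)$: compatibility with $\eta$ uses $\partial''' \circ f = \partial'$, and compatibility with the $P$-action follows from the symmetric formula $\acts{p}(m \otimes n) = \acts{p}m \otimes n + (-1)^{|p||m|} m \otimes \acts{p}n$ together with the $P$-equivariance of $f$.

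Right exactness: given the short exact sequence in $\cross(P)$, I will assemble it into an exact sequence in $\slie^2$
\[
(0,0) \longrightarrow (0,L) \longrightarrow (M,N) \longrightarrow (M,N'') \longrightarrow (0,0),
\]
whose first-component sequence is the trivial $0 \to 0 \to M \to M \to 0$ and whose second-component sequence is the given one. The verifications that $(0 \to M, L \hookrightarrow N)$ and $(1_M, N \twoheadrightarrow N'')$ lie in $\slie^2$ are either trivial (any map from $0$ preserves actions) or exactly the argument used above in the functoriality check. Proposition~\ref{P:tenfunc} then yields an exact sequence of Lie superalgebras
\[
(0 \otimes N) \rtimes (M \otimes L) \longrightarrow M \otimes N \longrightarrow M \otimes N'' \longrightarrow 0,
\]
and since $0 \otimes N = 0$ this collapses to the desired $M \otimes L \to M \otimes N \to M \otimes N'' \to 0$. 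The morphisms in this sequence are morphisms in $\cross(P)$ by the functoriality step, so the sequence is exact in $\cross(P)$ in the sense used in the excerpt.

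The only genuine content here is Proposition~\ref{P:tenfunc}; everything else is bookkeeping. The main potential obstacle I would flag is checking that $(0,L) \to (M,N)$ really is a morphism in $\slie^2$ (the only delicate point being that $L$ does not act on the target's first slot because that slot is $0$), and ensuring that the sequence constructed in $\slie^2$ genuinely has the short exactness required by the hypothesis of Proposition~\ref{P:tenfunc}. Once these are in place, the conclusion is immediate.
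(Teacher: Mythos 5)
Your proposal is correct and follows the same route as the paper, whose entire proof is the single sentence that the result is an immediate consequence of Proposition~\ref{P:tenfunc}; you have merely spelled out the bookkeeping (the choice of the short exact sequence $(0,0)\to(0,L)\to(M,N)\to(M,N'')\to(0,0)$ in $\slie^2$, the collapse of $(0\otimes N)\rtimes(M\otimes L)$ to $M\otimes L$, and the compatibility of the induced maps with the crossed $P$-module structures) that the authors leave implicit.
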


\begin{proof}
It is an immediate consequence of Proposition~\ref{P:tenfunc}
\end{proof}

\begin{definition}
Let $(M, \partial)$ be a crossed $P$-module. We define the zero and first non-abelian homologies of $P$ with coefficients in $M$ by setting
\[
\nh_0(P, M) = \Coker \nu \qquad \text{ and } \qquad \nh_1(P, M) = \Ker \nu,
\]
where $\nu \colon P \t M \to M$, $p \t m \mapsto \acts{p}m$, is the Lie superalgebra homomorphism as in Proposition~\ref{P:epim}.
\end{definition}

If we consider the crossed $P$-module $(P, \ide_P)$ we have that
\[
\nh_0(P, P) = \dfrac{P}{[P, P]} \cong \Ho_1(P).
\]
In addition, if $P$ is perfect, by Theorem~\ref{T:uce} we have that $\nh_1(P, P) \cong \Ho_2(P)$.

\

The zero and first non-abelian homologies generalize respectively the zero and first homologies of Lie superalgebras in the sense of the following proposition.
\begin{proposition}
Let the ground ring $\mathbb{K}$ be a field. Let $P$ be a Lie superalgebra and $M$  a $P$-supermodule thought as a crossed $P$-module $(M,0)$.
 Then there are isomorphisms of super vector spaces
\[
\nh_0(P, M)\cong \Ho_0(P,M)\quad \text{and} \quad  \nh_1(P, M)\cong \Ho_1(P,M).
\]
\end{proposition}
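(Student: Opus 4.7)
The plan is to reduce both isomorphisms to Proposition~\ref{P:augmentation} combined with the identifications \eqref{E:zero_homology} and \eqref{E:first_homology}. Since the crossed $P$-module structure is $(M,0)$, the action of $M$ on $P$ is trivial (indeed, $[m,m']=\acts{0(m)}m'=0$, so $M$ is abelian in the two-sided sense needed here), and the action of $P$ on $M$ is precisely the $P$-supermodule structure. Compatibility of the two actions is automatic because one of them is trivial. Hence the non-abelian tensor product $P\t M$ makes sense.

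First I would invoke Proposition~\ref{P:augmentation} to obtain a natural isomorphism of supermodules
\[
\varphi\colon P\t M \xrightarrow{\;\cong\;} \Omega(P)\t_{\UU(P)}M,\qquad p\t m\mapsto \sigma(p)\t m.
\]
Next I would identify the homomorphism $\nu\colon P\t M\to M$, $p\t m\mapsto \acts{p}m$, with the canonical evaluation homomorphism
\[
\epsilon\colon \Omega(P)\t_{\UU(P)}M \longrightarrow M,\qquad \omega\t m\mapsto \omega\cdot m,
\]
under $\varphi$. This is straightforward: on generators both composites send $p\t m$ to $\sigma(p)\cdot m=\acts{p}m$, using that the $\UU(P)$-module structure on $M$ coming from the $P$-supermodule structure is precisely $\sigma(p)\cdot m = pm = \acts{p}m$ (see Construction~\ref{C:enveloping} and the remark about supermodules).

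Finally, applying $\Ker$ and $\Coker$ to the commutative square $\nu=\epsilon\circ\varphi$ and using \eqref{E:zero_homology} and \eqref{E:first_homology}, I obtain
\[
\nh_0(P,M)=\Coker\nu\cong \Coker\epsilon\cong \Ho_0(P,M),
\]
\[
\nh_1(P,M)=\Ker\nu\cong \Ker\epsilon\cong \Ho_1(P,M),
\]
which yields the two isomorphisms of super vector spaces.

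The only point requiring some care is the compatibility $\nu=\epsilon\circ\varphi$, but once the isomorphism of Proposition~\ref{P:augmentation} is written out explicitly on generators, this is immediate from the definition of $\nu$; the field hypothesis on $\mathbb{K}$ is used only insofar as \eqref{E:zero_homology} and \eqref{E:first_homology} have already been stated in that setting. I do not anticipate a substantial obstacle beyond checking that $\varphi$ is well defined with respect to the relation $[p,p']\t m - p\t \acts{p'}m + \menosuno{p}{p'}p'\t \acts{p}m$ used in the proof of Proposition~\ref{P:augmentation}, which has already been addressed there.
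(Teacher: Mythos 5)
Your proof is correct and follows exactly the route the paper takes: the paper's own proof is a one-line appeal to Proposition~\ref{P:augmentation} together with the isomorphisms \eqref{E:zero_homology} and \eqref{E:first_homology}, and your write-up simply makes explicit the identification of $\nu$ with the evaluation map $\Omega(P)\t_{\UU(P)}M\to M$ that this appeal implicitly requires.
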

\begin{proof}
This is a direct consequence of Proposition~\ref{P:augmentation} and  the isomorphisms \eqref{E:zero_homology} and \eqref{E:first_homology}.
\end{proof}

\begin{proposition}\label{P:long}
Given a short exact sequence in $\cross(P)$
\[
0 \to (L, 0) \to (M, \partial) \to (N, \partial') \to 0
\]
we have an exact sequence of supermodules
\[
\nh_1(P, L) \to \nh_1(P, M) \to \nh_1(P, N) \to \nh_0(P, L) \to \nh_0(P, M) \to \nh_0(P, N) \to 0.
\]
\end{proposition}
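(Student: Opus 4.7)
The plan is to deduce the six-term exact sequence from the Snake Lemma applied in the abelian category of supermodules, using the natural transformation $\nu$ as the vertical arrows. First, I would apply Proposition~\ref{P:exact} to the short exact sequence of crossed $P$-modules to obtain the exact sequence of Lie superalgebras $P\otimes L \to P\otimes M \to P\otimes N \to 0$, which, forgetting brackets, is also exact as a sequence of supermodules. Combined with the underlying exact sequence $0\to L\to M\to N\to 0$, this produces the commutative diagram with exact rows
\[
\begin{array}{ccccccc}
P\otimes L & \longrightarrow & P\otimes M & \longrightarrow & P\otimes N & \longrightarrow & 0 \\
\downarrow\nu_L & & \downarrow\nu_M & & \downarrow\nu_N & & \\
L & \longrightarrow & M & \longrightarrow & N & \longrightarrow & 0,
\end{array}
\]
in which commutativity of each square, tested on a generator $p\otimes x$, reduces to the identity $f({}^p x)={}^p f(x)$; this holds because morphisms in $\cross(P)$ preserve the $P$-action by definition.

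Next, I would invoke the Snake Lemma to produce the exact sequence
\[
\Ker\nu_L\to\Ker\nu_M\to\Ker\nu_N\to\Coker\nu_L\to\Coker\nu_M\to\Coker\nu_N\to 0,
\]
and identify it with the claimed six-term sequence using the definitions $\nh_1(P,-)=\Ker\nu$ and $\nh_0(P,-)=\Coker\nu$.

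The only point requiring attention is that the entire argument must take place in the abelian category of supermodules, so that every arrow in sight is an even homomorphism of supermodules and the Snake Lemma applies verbatim. This is automatic: $\nu$ is a Lie superalgebra homomorphism, hence even, by Proposition~\ref{P:epim}, and the horizontal maps on the top row are induced by morphisms in $\cross(P)$, which are even by definition. I therefore do not anticipate any genuine obstacle; once the right exactness statement of Proposition~\ref{P:exact} is used to produce the top row, the proof is essentially a direct invocation of the Snake Lemma.
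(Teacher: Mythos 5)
Your proposal is correct and follows exactly the paper's argument: the authors likewise form the two-row diagram with top row $P\otimes L \to P\otimes M \to P\otimes N \to 0$ obtained from Proposition~\ref{P:exact}, vertical maps $\nu$, and apply the snake lemma. Your additional remarks on commutativity of the squares and evenness of the maps are correct elaborations of details the paper leaves implicit.
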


\begin{proof}
The proof is an immediate consequence of the snake lemma applied to the diagram obtained from Proposition~\ref{P:exact}
\[
\xymatrix{
         & P \t L \ar[d] \ar[r] & P \t M \ar[d] \ar[r] & P \t N \ar[d] \ar[r] & 0  \\
0 \ar[r] & L             \ar[r] & M             \ar[r] & N             \ar[r] & 0.
}
\]
\end{proof}

\subsection{Application to the cyclic homology of associative superalgebras}
Now we recall from \cite{Kas} and \cite{IoKo} the definition of cyclic homology of associative superalgebras.
 Let $A$ be an associative superalgebra and $(C'_*(A), d'_*)$ denote its Hochschild complex,
 that is $C'_n(A) = A^{\otimes_{\mathbb{K}}(n+1)}$ and the boundary map $d_n \colon C'_n(A) \to C'_{n-1}(A)$ is given by
\begin{multline*}
d'_n(a_0 \t \cdots \t a_n) = \sum_{i=0}^{n-1} (-1)^{i} a_0 \t \cdots \t a_ia_{i+1} \t \cdots \t a_n  \\ + (-1)^{n + |a_n|(|a_0| + \cdots + |a_{n-1}|)}a_na_0 \t \cdots \t a_{n-1}.
\end{multline*}
Now the cyclic group $\mathbb{Z}/(n+1)\mathbb{Z}$ acts on $A^{\t_{\mathbb{K}} (n+1)}$ via
\[
t_n(a_0 \t \cdots \t a_n) = (-1)^{n + |a_n|\sum_{k<n}|a_k|}a_n \t a_0 \t \cdots \t a_{n-1},
\]
where $t_n = 1 + (n+1)\mathbb{Z} \in \mathbb{Z}/(n+1)\mathbb{Z}$. For each $n \geq 0$, consider the quotient
$C_n(A)= A^{\otimes_{\mathbb{K}}(n+1)}/\Im(1 - t_n)$ which is the module of coinvariants of $C_n'(A)$ under
the $\mathbb{Z}/(n+1)\mathbb{Z}$-action. Then $d'_n$ induces a well-defined map $d_n\colon C_n(A)\to C_{n-1}(A)$
and  there is an induced chain complex $(C_*(A), d_*)$, which is called the Connes complex of $A$.
Its homologies are, by definition, the \emph{cyclic homologies of the associative superalgebra} $A$, denoted by $\hc_n(A)$, $n\geq 0$.

Easy calculations show that, given an associative superalgebra $A$,  $\hc_1(A)$ is the kernel of the homomorphism of supermodules
\[
(A \t_{\mathbb{K}} A)/\II(A) \to [A, A], \quad a\t b\mapsto ab -(-1)^{|a||b|}ba,
\]
where $[A, A]$ is the graded submodule of $A$ generated by the elements $ab-(-1)^{|a||b|}ba$ and
$\II(A)$ is the graded submodule of the supermodule $A \t_{\mathbb{K}} A$ generated by the elements
\begin{align*}
&a \t b + \menosuno{a}{b} b \t a,\\
&ab \t c - a \t bc + (-1)^{|c|(|a| + |b|)} ca \t b,
\end{align*}
for all homogeneous $a, b, c \in A$.

Now let us consider $A$ as a Lie superalgebra \big(see Example~\ref{E:sup}(i)\big). Then there is a Lie superalgebra structure on $(A \t_{\mathbb{K}} A)/\II(A)$ given by
\[
[a\t b, a'\t b']= [a, b] \t [a', b']
\]
for all $a,a', b,b'\in A$. We denote this Lie superalgebra by $\VV(A)$. In fact, $\VV(A)$ is the quotient
of the non-abelian tensor product $A \t A$ by the graded ideal generated by the elements $x \t y + \menosuno{x}{y} y \t x$
and $xy \t z - x \t yz + (-1)^{|z|(|x| + |y|)} zx \t y$, for all homogeneous $x, y, z \in A$.

\begin{proposition} Let $A$ be a Lie superalgebra. Then the following assertions hold:
\begin{itemize}[font=\normalfont]
\item[(i)] There are compatible actions of the Lie superalgebras $A$ and $\VV(A)$ on each other.
\item[(ii)] The map $\mu \colon \VV(A) \to A$ given by $x \t y \mapsto [x, y]$, together with the action of $A$ on $\VV(A)$, is a crossed module of Lie superalgebras.
\item[(iii)] The action of $A$ on $\VV(A)$ induces the trivial action of $A$ on $\hc_1(A)$.

\item[(iv)] There is a short exact sequence in the category $\cross(A)$
\[
0 \to (\hc_1(A),0) \to (\VV(A),\mu) \to ([A, A], i) \to 0,
\]
where $i\colon [A,A]\to A$ is the inclusion.

\end{itemize}
\end{proposition}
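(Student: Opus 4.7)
The key technical ingredient driving the proof is the Leibniz-type identity
\[
m \t [n, n'] = [m, n] \t n' - \menosuno{n}{n'} [m, n'] \t n
\]
in $\VV(A)$, valid for all homogeneous $m, n, n' \in A$, verified by expanding $[n, n'] = nn' - \menosuno{n}{n'} n'n$ and applying the Hochschild-type relations defining $\II(A)$ twice. Combined with the antisymmetry relation $u \t v = -\menosuno{u}{v} v \t u$, this yields the consequence $\acts{u}(x' \t y') = u \t [x', y']$ in $\VV(A)$ for every $u \in [A, A]$ and $x', y' \in A$, where the $A$-action on the left is inherited from the natural action of $A$ on $A \t A$ given by Proposition~\ref{P:epim}(ii). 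For (i), this inherited action, namely $\acts{a'}(a \t b) = [a', a] \t b + \menosuno{a}{a'} a \t [a', b]$, is directly checked to preserve the ideal cutting $\VV(A)$ out of $A \t A$ and thus descends; the action of $\VV(A)$ on $A$ is defined by $\acts{\xi}a := [\mu(\xi), a]$; compatibility of the two actions then follows by direct computation from the identities above and the graded Jacobi identity in $A$.

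For (ii), the first crossed module axiom $\mu(\acts{a}\xi) = [a, \mu(\xi)]$ applied to $\xi = x \t y$ is precisely the graded Jacobi identity. The second axiom reads
\[
\acts{\mu(x \t y)}(x' \t y') = \acts{[x,y]}(x' \t y') = [x, y] \t [x', y'] = [x \t y,\, x' \t y']_{\VV(A)},
\]
where the middle equality is exactly the highlighted consequence applied with $u = [x, y] \in [A, A]$, and the last is the definition of the bracket in $\VV(A)$.

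Part (iii) is the principal obstacle. Take a homogeneous $\xi = \sum_i x_i \t y_i \in \hc_1(A)$ of degree $\bar{\alpha}$, so $\sum_i [x_i, y_i] = 0$ in $A$. Applying the Leibniz identity to each term $x_i \t [a, y_i]$ in
\[
\acts{a}\xi = \sum_i \big([a, x_i] \t y_i + \menosuno{a}{x_i}\, x_i \t [a, y_i]\big),
\]
the $(\cdot) \t y_i$ contributions $[a, x_i] \t y_i$ and $\menosuno{a}{x_i}[x_i, a] \t y_i$ cancel pairwise by graded antisymmetry of the bracket, leaving
\[
\acts{a}\xi = -(-1)^{|a|\bar{\alpha}} \Big(\sum_i [x_i, y_i]\Big) \t a = 0.
\]
The general case follows by linearity. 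This goes beyond the general Lemma~\ref{L:crosker}(iii), which yields only triviality of the $\Ima \mu = [A, A]$-action on $\Ker \mu = \hc_1(A)$; here the full $A$-action is trivial, and that triviality relies essentially on the Hochschild-type relations present in $\VV(A)$ but absent from a bare non-abelian tensor product.

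Finally, (iv) assembles the pieces: the sequence $0 \to \hc_1(A) \to \VV(A) \xrightarrow{\mu} [A, A] \to 0$ is exact as a sequence of supermodules by the very definition $\hc_1(A) = \Ker \mu$, with $\hc_1(A)$ an abelian Lie superalgebra because it lies in the centre of $\VV(A)$ by Lemma~\ref{L:crosker}(i). The three crossed-$A$-module structures are furnished respectively by (iii), (ii), and the standard inclusion of the graded ideal $[A, A] \hookrightarrow A$; both maps in the sequence are $A$-equivariant and commute with the structure maps to $A$ by construction.
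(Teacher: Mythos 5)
Your proposal is correct and follows essentially the same route as the paper: the central point in both arguments is that the Hochschild-type relations in $\II(A)$ force $\acts{a}(x \t y) = a \t [x, y]$ (equivalently $-(-1)^{|a|(|x|+|y|)}[x,y] \t a$) in $\VV(A)$, from which (ii) and (iii) follow at once. Your repackaging of that computation as a Leibniz-type identity, and your direct verification of the crossed-module axioms where the paper instead passes to a quotient of the crossed module $A \t A \to A$, are only cosmetic differences.
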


\begin{proof}\hfill
\begin{enumerate}[leftmargin=0.cm,itemindent=.5cm,labelsep=0.2cm,align=left]
 \item[(i)] The action of $A$ on $\VV(A)$ is induced by the action of $A$ on $A\t A$ given in Proposition~\ref{P:epim}(ii), that is
\begin{align*}
\acts{a}(x \t y) &= [a, x] \t y + \menosuno{a}{x} x \t [a, y] \\
&= ax \t y + (-1)^{|a|(|x| + |y|)} x \t ya - \menosuno{x}{a} x \t ay - \menosuno{x}{a} xa \t y \\
&=a \t xy - \menosuno{x}{y} a \t yx \\
&= a \t [x, y],
\end{align*}
whilst the action of $\VV(A)$ on $A$ is defined by
\[
{}^{x\t y}a=\big[[x,y],a\big]
\]
for all homogeneous $a,x,y\in A$. Straightforward calculations show that these are indeed (compatible) actions of Lie superalgebras.

\item[(ii)] Since the crossed module of Lie superalgebras $A\t A \to A$, $x\t y\mapsto [x,y]$, given in Proposition~\ref{P:epim},
vanishes on the elements of the form $x \t y + \menosuno{x}{y} y \t x$ and $xy \t z - x \t yz + (-1)^{|z|(|x| + |y|)} zx \t y$,
then $\mu$ is well defined and obviously it is a crossed module of Lie superalgebras.

\item[(iii)] If $\sum_i \lambda_i (x_i \t y_i) \in \hc_1(A)$, i.e. $\sum_i \lambda_i [x_i, y_i]=0$, then  for all $a \in A$ we have
\[
\acts{a}\big(\sum_i \lambda_i (x_i \t y_i)\big) = \sum_i \lambda_i(a \t [x_i, y_i]) = a \t \sum_i \lambda_i [x_i, y_i] = 0.
\]

\item[(iv)] This is an immediate consequence of the assertions above. \qedhere
\end{enumerate}
\end{proof}

By Proposition~\ref{P:long} we have the following exact sequence of supermodules
\begin{equation}\label{E:seq2}
\begin{tikzpicture}[baseline=(current  bounding  box.center)]
        \matrix (m) [
            matrix of math nodes,
            row sep=1em,
            column sep=2em,
            text height=2.8ex, text depth=1.5ex
        ]
        { \nh_1\big(A, \hc_1(A)\big)  & \nh_1\big(A, \VV(A)\big) & \nh_1(A, [A, A]) \\
             \nh_0\big(A, \hc_1(A)\big) & \nh_0\big(A, \VV(A)\big) &\nh_0(A, [A, A]) & 0. \\
        };
        \path[overlay,->, font=\scriptsize,>=angle 90]
        (m-1-1) edge (m-1-2)
        (m-1-2) edge (m-1-3)
        (m-1-3) edge[out=355,in=175]  (m-2-1)
        (m-2-1) edge (m-2-2)
        (m-2-2) edge (m-2-3)
        (m-2-3) edge (m-2-4);
\end{tikzpicture}
\end{equation}

Below, we will calculate some of the terms of this exact sequence. At first,
by analogy to the Dennis-Stein generators \cite{DeSt}, we give a definition of the first Milnor cyclic
homology for associative superalgebras.

\begin{definition}
Let $A$ be an associative superalgebra. We define the first Milnor cyclic homology $\hc^M_1(A)$ of $A$ to be the quotient
of the supermodule $A\t_{\mathbb{K}} A$ by the graded ideal generated by the elements
\begin{align*}
&a \t b + \menosuno{a}{b} b \t a,\\
&ab \t c - a \t bc + (-1)^{|c|(|a| + |b|)} ca \t b,\\
&a \t bc -\menosuno{b}{c} a\t cb,
\end{align*}
 for all homogeneous $a,b,c\in A$.
\end{definition}

It is clear that if $A$ is supercommutative, that is, $ab=\menosuno{a}{b}ba$, for all homogeneous $a,b\in A$, then $\hc_1(A) \cong \hc^M_1(A)$.

\begin{lemma}
We have the following equalities and isomorphisms
\begin{itemize}[font=\normalfont]
\item[(i)] $\nh_0\big(A, \hc_1(A)\big) = \hc_1(A)$,
\item[(ii)] $\nh_1\big(A, \hc_1(A)\big) \cong A/[A, A] \t_{\mathbb{K}} \hc_1(A)$,
\item[(iii)] $\nh_0(A, [A, A]) = [A, A]/\big[A, [A, A]\big]$,
\item[(iv)]  $\nh_0\big(A, \VV(A)\big) \cong \hc^M_1(A)$.
\end{itemize}
\end{lemma}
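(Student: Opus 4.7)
The plan is to unwind the definitions $\nh_0(A, -) = \Coker \nu$ and $\nh_1(A, -) = \Ker \nu$ in each case, using the description of the action of $A$ on each of the crossed modules $\hc_1(A)$, $[A,A]$ and $\VV(A)$ obtained in the preceding proposition.

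For parts (i) and (ii), the key observation is that the $A$-action on $\hc_1(A)$ is trivial, as was just shown in part (iii) of the previous proposition. Consequently the homomorphism $\nu \colon A \t \hc_1(A) \to \hc_1(A)$, $a \t x \mapsto \acts{a} x$, is the zero map. Therefore $\nh_0(A, \hc_1(A)) = \Coker \nu = \hc_1(A)$, which proves (i). For (ii), since $\hc_1(A)$ is abelian and acts trivially on $A$ (the action factors through $\hc_1(A) \hookrightarrow \VV(A) \to A$ composed with $[-,-]$, which is zero on $\hc_1(A)$), Proposition~\ref{P:ab} applies to give an even isomorphism of supermodules $A \t \hc_1(A) \cong A^{\ab} \t_{\mathbb{K}} \hc_1(A)^{\ab} = A/[A, A] \t_{\mathbb{K}} \hc_1(A)$. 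Since $\nu = 0$, we conclude $\nh_1(A, \hc_1(A)) = \Ker \nu = A/[A, A] \t_{\mathbb{K}} \hc_1(A)$.

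For (iii), the inclusion $i \colon [A, A] \hookrightarrow A$ is a crossed module of Lie superalgebras and $A$ acts on $[A, A]$ through the bracket of $A$, so $\nu \colon A \t [A, A] \to [A, A]$ sends $a \t x$ to $[a, x]$. The image of $\nu$ is precisely the graded submodule $[A, [A, A]]$ of $[A, A]$, and therefore $\nh_0(A, [A, A]) = [A, A]/[A, [A, A]]$.

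For (iv), the action of $A$ on $\VV(A)$ was computed in part (i) of the previous proposition to be $\acts{a}(x \t y) = a \t [x, y]$. Hence $\nu \colon A \t \VV(A) \to \VV(A)$ has image the graded submodule of $\VV(A)$ generated by the elements $a \t [x, y] = a \t xy - \menosuno{x}{y}\, a \t yx$ for homogeneous $a, x, y \in A$. Thus $\nh_0(A, \VV(A))$ is the quotient of $A \t_{\mathbb{K}} A$ by the graded ideal generated by the defining relators of $\II(A)$ together with the relators $a \t bc - \menosuno{b}{c}\, a \t cb$. Comparing with the definition of $\hc_1^M(A)$, one reads off the isomorphism $\nh_0(A, \VV(A)) \cong \hc_1^M(A)$. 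The only point that needs care is checking that $\nu$ is well defined on $\VV(A)$ (that is, that $a \t [x, y]$ is compatible with the relations defining $\II(A)$); this follows from the general fact, already established, that $\nu$ is a well-defined Lie superalgebra homomorphism on the non-abelian tensor product, combined with the fact that the relators defining $\VV(A)$ as a quotient of $A \t A$ are sent to zero. I expect the main (mild) obstacle to be precisely this verification, which is a short but careful check with the sign conventions.
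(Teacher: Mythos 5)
Your proposal is correct and follows essentially the same route as the paper: (i) and (ii) are argued identically (trivial action gives $\nu=0$, and Proposition~\ref{P:ab} identifies $A\t \hc_1(A)$ with $A/[A,A]\t_{\mathbb K}\hc_1(A)$), while (iii) and (iv), which the paper dismisses as straightforward, you carry out by computing $\Ima\nu$ exactly as intended. The only redundancy is your worry about well-definedness of $\nu$ in (iv), which is already guaranteed by Proposition~\ref{P:epim} once the action of $A$ on $\VV(A)$ is known to be well defined (part (i) of the preceding proposition).
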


\begin{proof}\hfill
\begin{enumerate}[leftmargin=0.cm,itemindent=.5cm,labelsep=0.2cm,align=left]
\item[(i)] Since $A$ acts trivially on $\hc_1(A)$, we have that $\Coker\big(A \t \hc_1(A) \to \hc_1(A) \big) = \hc_1(A)$.

\item[(ii)] Since $\hc_1(A)$ is abelian, by Proposition~\ref{P:ab} we have that $\Ker\big(A \t \hc_1(A) \to \hc_1(A) \big) \cong A/[A, A] \t_{\mathbb{K}} \hc_1(A)$.

\item[(iii)] and (iv) are straightforward. \qedhere
\end{enumerate}
\end{proof}

It follows that the exact sequence \eqref{E:seq2} can be written as in the following theorem.

\begin{theorem}
If $A$ is a unital associative  superalgebra. Then there is an exact sequence of supermodules

\begin{tikzpicture}
        \matrix (m) [
            matrix of math nodes,
            row sep=1em,
            column sep=2em,
            text height=2.8ex, text depth=1.5ex
        ]
        { \dfrac{A}{[A, A]} \t_{\mathbb{K}} \hc_1(A)  & \nh_1\big(A, \VV(A)\big) & \nh_1(A, [A, A]) \\
             \hc_1(A) & \hc^M_1(A) & \dfrac{[A, A]}{\big[A,[A, A]\big]} & 0. \\
        };
        \path[overlay,->, font=\scriptsize,>=angle 90]
        (m-1-1) edge (m-1-2)
        (m-1-2) edge (m-1-3)
        (m-1-3) edge[out=355,in=175]  (m-2-1)
        (m-2-1) edge (m-2-2)
        (m-2-2) edge (m-2-3)
        (m-2-3) edge (m-2-4);
\end{tikzpicture}
\end{theorem}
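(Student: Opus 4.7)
The plan is to obtain the claimed exact sequence as a direct consequence of the preceding lemma and Proposition~\ref{P:long}. The first step is to apply Proposition~\ref{P:long} to the short exact sequence of crossed $A$-modules
\[
0 \to (\hc_1(A), 0) \to (\VV(A), \mu) \to ([A, A], i) \to 0
\]
produced in the previous proposition. This yields exactly the six-term exact sequence~\eqref{E:seq2} linking $\nh_1$ and $\nh_0$ of these three crossed modules. The second step is to substitute the four identifications of the preceding lemma, term by term, into \eqref{E:seq2}. The two leftmost terms of the resulting sequence, namely $\nh_1(A,\hc_1(A))$, become $(A/[A,A]) \t_{\mathbb{K}} \hc_1(A)$ by applying Proposition~\ref{P:ab} (both factors are abelian with trivial mutual action); the connecting term $\nh_0(A,\hc_1(A))$ becomes $\hc_1(A)$ because $A$ acts trivially on $\hc_1(A)$; and the rightmost two terms become $\hc^M_1(A)$ and $[A,A]/[A,[A,A]]$ respectively. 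No new homological machinery is required beyond what has already been proved.

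Since the lemma itself has been stated and I am permitted to assume it, the only genuine content of the proof is checking that the substitution is compatible with the maps of \eqref{E:seq2}, and in particular that the middle maps agree with the obvious maps induced by $\hc_1(A) \hookrightarrow \VV(A) \twoheadrightarrow [A,A]$. This is a routine naturality check: the functor $\nh_0(A, -)$ is obtained as a cokernel of the crossed-module morphism $\nu$, so the induced maps factor through quotients in a functorial way, and likewise for $\nh_1(A, -)$ as the kernel of $\nu$.

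The most delicate point is the identification $\nh_0(A, \VV(A)) \cong \hc^M_1(A)$. I would verify this by unpacking the action of $A$ on $\VV(A)$ computed in the preceding proposition, which yields $\acts{a}(x \t y) = a \t [x,y]$. Consequently, the cokernel of $\nu \colon A \t \VV(A) \to \VV(A)$ is obtained from $(A \t_{\mathbb{K}} A)/\II(A)$ by imposing the additional relation $a \t [b,c] = 0$, which is precisely the third defining relation $a \t bc - \menosuno{b}{c} a \t cb$ of $\hc^M_1(A)$. Once this comparison is made, the claimed six-term exact sequence follows by reading off \eqref{E:seq2} with the substituted terms.
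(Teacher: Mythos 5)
Your proposal is correct and follows essentially the same route as the paper: the theorem is obtained by applying Proposition~\ref{P:long} to the short exact sequence $0 \to (\hc_1(A),0) \to (\VV(A),\mu) \to ([A,A],i) \to 0$ to get the sequence \eqref{E:seq2}, and then substituting the four identifications of the preceding lemma (trivial action for $\nh_0(A,\hc_1(A))$, Proposition~\ref{P:ab} for $\nh_1(A,\hc_1(A))$, and the cokernel computations for the last two terms). Your explicit verification that $\nh_0(A,\VV(A)) \cong \hc^M_1(A)$ via the formula $\acts{a}(x\t y)=a\t[x,y]$ is exactly the content the paper leaves as ``straightforward'' in the lemma, so nothing is missing.
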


\begin{cor}
If $A$ is perfect as a Lie superalgebra, we have an exact sequence
\[
0 \to \nh_1\big(A, \VV(A)\big) \to \Ho_2(A) \to \hc_1(A) \to 0,
\]
where $\Ho_2(A)$ is the usual second homology of the Lie superalgebra $A$. If in addition $\Ho_2(A) = 0$, then all terms of the exact sequence in the previous theorem are trivial.
\end{cor}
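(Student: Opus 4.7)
The plan is to specialize the six-term exact sequence from the previous theorem to the case $A$ perfect, and then upgrade it to a short exact sequence by showing the connecting map $\Ho_2(A) \to \hc_1(A)$ is surjective.

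First I would unpack what perfectness gives in the theorem's sequence. The identity $A = [A, A]$ forces $A/[A,A] \otimes_{\mathbb{K}} \hc_1(A) = 0$ and $[A,A]/\big[A,[A,A]\big] = 0$, so the leftmost and rightmost terms drop out. Moreover $\nh_1(A, [A,A]) = \nh_1(A, A)$, which by the remark following Theorem~\ref{T:uce} equals $\Ho_2(A)$ (since a perfect $A$ makes $\mathfrak{u}\colon A \t A \to A$ the universal central extension, whose kernel is $\Ho_2(A)$). What remains is the four-term exact sequence
\[
0 \to \nh_1(A, \VV(A)) \to \Ho_2(A) \to \hc_1(A) \to \hc^M_1(A) \to 0.
\]

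The crux is to prove that $\Ho_2(A) \to \hc_1(A)$ is surjective, equivalently that $\hc^M_1(A) = 0$. For this I would exploit the quotient map $\pi \colon A \t A \twoheadrightarrow \VV(A)$ recorded just before the proposition endowing $\VV(A)$ with its crossed-module structure. By construction $\mu \circ \pi = \nu$ as maps down to $A$. Tracing the snake-lemma connecting homomorphism $\delta$ from Proposition~\ref{P:long} on a representative $\sum \lambda_i a_i \t b_i \in \Ho_2(A)$, with $\tilde b_i \in \VV(A)$ chosen so that $\mu(\tilde b_i) = b_i$ (which is possible since $\mu$ is surjective onto $[A,A] = A$), the element $y = \sum \lambda_i a_i \t \tilde b_i \in A \t \VV(A)$ lifts the representative and the snake image is $\nu_2(y) = \sum \lambda_i a_i \t \mu(\tilde b_i) = \pi\big(\sum \lambda_i a_i \t b_i\big)$ in $\VV(A)$. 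Hence $\delta$ is just the restriction of $\pi$ to $\Ho_2(A)$. Surjectivity now follows by a one-line lift: any $c \in \hc_1(A) \subseteq \VV(A)$ lifts along $\pi$ to some $\hat c \in A \t A$, and $\nu(\hat c) = \mu(\pi(\hat c)) = \mu(c) = 0$ places $\hat c$ in $\Ho_2(A)$ with $\delta(\hat c) = c$.

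For the additional claim, assume $\Ho_2(A) = 0$. The short exact sequence just established forces $\nh_1(A, \VV(A)) = 0$ and $\hc_1(A) = 0$. The remaining terms of the theorem's sequence are $A/[A,A] \otimes_{\mathbb{K}} \hc_1(A) = 0$ and $[A,A]/\big[A,[A,A]\big] = 0$ by perfectness, $\nh_1(A, [A,A]) \cong \Ho_2(A) = 0$ by hypothesis, and $\hc^M_1(A) = 0$ by the surjectivity established above. So all six terms of the previous theorem's sequence vanish.

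The main obstacle is the surjectivity step, i.e.\ the identification of the snake-lemma connecting map with the restriction of the quotient $\pi \colon A \t A \twoheadrightarrow \VV(A)$. Once this identification is made explicit, the rest of the argument is pure bookkeeping with the perfectness hypothesis.
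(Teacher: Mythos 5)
Your proof is correct, and its skeleton --- specializing the six-term sequence of the preceding theorem via $A=[A,A]$, killing the two end terms, and identifying $\nh_1(A,[A,A])=\nh_1(A,A)$ with $\Ho_2(A)$ --- is exactly the paper's. Where you diverge is in establishing the surjectivity of $\Ho_2(A)\to\hc_1(A)$: you compute the snake-lemma connecting homomorphism explicitly, identify it with the restriction to $\Ho_2(A)=\Ker(A\t A\to A)$ of the projection $\pi\colon A\t A\twoheadrightarrow \VV(A)$, and then lift any $c\in\hc_1(A)$ along $\pi$. The paper instead disposes of this step in one clause: perfectness makes the action map $A\t\VV(A)\to\VV(A)$, $a\t(x\t y)\mapsto a\t[x,y]$, surjective (every element of $\VV(A)$ is a sum of classes $a\t b$ with $b\in[A,A]=A$), so $\hc^M_1(A)\cong\nh_0\big(A,\VV(A)\big)=0$ and exactness at $\hc_1(A)$ forces surjectivity. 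Your route costs more work but yields an explicit description of the map $\Ho_2(A)\to\hc_1(A)$ as $\pi|_{\Ho_2(A)}$, which the paper never records; the paper's route is shorter and stays entirely within the formal exactness properties of $\nh_0$ and $\nh_1$. Your treatment of the final claim (all six terms vanish when moreover $\Ho_2(A)=0$) matches the intended argument.
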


\begin{proof}
Since $A$ is perfect we know that $\nh_1(A, A) \cong \Ho_2(A)$, $A/[A, A] \t_{\mathbb{K}} \hc_1(A) = 0$ and the map $A \t \VV(A) \to \VV(A)$ is surjective.
\end{proof}

\section{Non-abelian exterior product of Lie superalgebras}
In this section we extend to Lie superalgebras the definition of the non-abelian exterior product of Lie algebras introduced in \cite{Ell1}.
Then we use it to derive the Hopf formula for the second homology of a Lie superalgebra and to construct a six-term exact homology sequence of Lie superalgebras.
\subsection{Construction of the non-abelian exterior product}

Let $P$ be a Lie superalgebra and $(M,\partial)$ and $(N,\partial')$  two crossed $P$-modules. We consider the actions of $M$ and $N$ on each other via $P$.

\begin{lemma}
Let $M \square N$ be the graded submodule of $M \t N$ generated by the elements
\begin{enumerate}[font=\normalfont]
\item[(a)] $m \t n + \menosuno{m'}{n'} m' \t n'$, where $\partial(m) = \partial'(n')$ and $\partial(m') = \partial'(n)$,
\item[(b)] $m_{\bar{0}} \t n_{\bar{0}}$, where $\partial(m_{\bar{0}}) = \partial'(n_{\bar{0}})$,
\end{enumerate}
with $m, m' \in M\scero \cup M\suno$, $n, n' \in N\scero \cup N\suno$, $m\scero \in M\scero$ and $n\scero \in N\scero$.
Then, $M \square N$ is a graded ideal in the centre of $M \t N$.
\end{lemma}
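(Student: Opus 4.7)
The plan is to establish the stronger statement $[M \square N, M \t N] = 0$, from which both conclusions follow at once: lying in the centre is exactly this identity, and being a graded ideal requires in addition only that $M \square N$ be a graded submodule, which is immediate from the defining generators. Indeed, in type~(a) the conditions $\partial(m) = \partial'(n')$ and $\partial(m') = \partial'(n)$, together with $\partial$ and $\partial'$ being of even grade, force $|m| = |n'|$ and $|m'| = |n|$, so $|m \t n| = |m' \t n'|$, while type~(b) is homogeneous of degree $\bar{0}$.

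Since $M \t N$ is generated as a Lie superalgebra by elementary tensors, it suffices to check that the bracket of each generator of $M \square N$ with an arbitrary $m'' \t n''$ is zero. Relation~(iv) of Definition~\ref{D:tensor} reads
\[
[m \t n, m'' \t n''] = -(-1)^{|m||n|}\bigl({}^{n} m \t {}^{m''} n''\bigr),
\]
and the key observation is that, because $(M,\partial)$ and $(N,\partial')$ are crossed $P$-modules, the actions between $M$ and $N$ are induced from $P$; thus whenever $\partial'(n) = \partial(\tilde m)$ for some $\tilde m \in M$, axiom~(ii) of Definition~\ref{D:crossed} yields ${}^n m = {}^{\partial'(n)} m = {}^{\partial(\tilde m)} m = [\tilde m, m]$.

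For a type~(a) generator $z = m \t n + (-1)^{|m'||n'|}\, m' \t n'$, the hypotheses $\partial(m') = \partial'(n)$ and $\partial(m) = \partial'(n')$ give ${}^n m = [m', m]$ and ${}^{n'} m' = [m, m']$. Substituting and using graded antisymmetry $[m, m'] = -(-1)^{|m||m'|}[m', m]$ together with the degree equality $|m'| = |n|$ (so $(-1)^{|m||n|} = (-1)^{|m||m'|}$), the two terms of $[z, m'' \t n'']$ cancel. For a type~(b) generator $z = m_{\bar{0}} \t n_{\bar{0}}$ the same principle gives ${}^{n_{\bar{0}}} m_{\bar{0}} = [m_{\bar{0}}, m_{\bar{0}}]$, which vanishes by the third defining identity of a Lie superalgebra, so $[z, m'' \t n''] = 0$ at once. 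The brackets $[m'' \t n'', z]$ then also vanish by graded antisymmetry of $M \t N$.

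The main---and essentially only---obstacle is the sign bookkeeping in the type~(a) case: one must verify that the degree constraints forced by the matching conditions on $\partial$ and $\partial'$ make the two summands of $[z, m'' \t n'']$ line up so as to cancel under graded antisymmetry. The type~(b) case is a direct application of the alternativity axiom $[x_{\bar{0}}, x_{\bar{0}}] = 0$.
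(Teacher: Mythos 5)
Your strategy is essentially the paper's own, reflected into the other tensor factor: the paper brackets a general elementary tensor $x\otimes y$ against the generator so that the Peiffer identity converts ${}^{m}n$ into a bracket $[n',n]$ inside $N$, whereas you convert ${}^{n}m$ into $[m',m]$ inside $M$; in both versions the two summands then cancel by graded antisymmetry. The reduction to Lie generators of $M\otimes N$, the use of axiom (ii) of Definition~\ref{D:crossed}, the sign computation in the main case, and the type (b) argument via $[m_{\bar 0},m_{\bar 0}]=0$ are all correct.

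There is, however, one genuine flaw: the claim that $\partial(m)=\partial'(n')$ and $\partial(m')=\partial'(n)$, with $\partial,\partial'$ even, \emph{force} $|m|=|n'|$ and $|m'|=|n|$. This is false exactly when the common value is $0$, since $0$ lies in every homogeneous component of $P$; one can perfectly well have $m'\in M_{\bar 1}$ and $n\in N_{\bar 0}$ with $\partial(m')=0=\partial'(n)$. In that situation the identity $(-1)^{|m||n|}=(-1)^{|m||m'|}$ on which your cancellation rests is unavailable, so your argument does not cover this case, and it is not vacuous (it occurs whenever $\Ker\partial$ or $\Ker\partial'$ has elements of both parities). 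The paper treats it explicitly and you need the same one-line patch: if $|m'|\neq|n|$ then necessarily $\partial(m')=0=\partial'(n)$, and since the generator is homogeneous ($|m|+|n|=|m'|+|n'|$) also $|m|\neq|n'|$, hence $\partial(m)=0=\partial'(n')$; then ${}^{n}m={}^{\partial'(n)}m=0$ and ${}^{n'}m'={}^{\partial'(n')}m'=0$, so each summand of $[z,m''\otimes n'']$ vanishes separately. With this case added your proof is complete and coincides in substance with the paper's.
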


\begin{proof}
Given an element  $m \t n + \menosuno{m'}{n'} m' \t n'$ of the form (a), suppose that $|m'| = |n|$, then we have
\begin{align*}
[x \t y, m \t n + \menosuno{m'}{n'} m' \t n'] &= - \menosuno{x}{y} (\acts{y}x) \t \big(\acts{m}n + \menosuno{m'}{n'}(\acts{m'}\!n')\big) \\
{} &= - \menosuno{x}{y} (\acts{y}x) \t \big(\acts{\partial(m)}n + \menosuno{m'}{n'} (\acts{\partial(m')}n')  \big) \\
{} &= - \menosuno{x}{y} (\acts{y}x) \t \big(\acts{\partial'(n')}n + \menosuno{m'}{n'} (\acts{\partial'(n)}n')  \big) \\
{} &= - \menosuno{x}{y} (\acts{y}x) \t \big([n', n] + \menosuno{n}{n'} [n, n']  \big) \\
{} &= 0.
\end{align*}
This is also true when $|m'| \neq |n|$. Indeed, if $|m'| \neq |n|$, since $\partial$, $\partial'$ are even maps,
the equality $\partial(m) = \partial'(n')$ holds if and only if $\partial(m) = 0 = \partial'(n')$.
Now take an element $m\scero \t n\scero$ of the form (b). Then we have
\begin{align*}
[x \t y, m\scero \t n\scero] &=  - \menosuno{x}{y} (\acts{y}x) \t (\acts{m\scero}n\scero) \\
{} &= - \menosuno{x}{y} (\acts{y}x) \t (\acts{\partial(m\scero)}n\scero) \\
{} &= - \menosuno{x}{y} (\acts{y}x) \t (\acts{\partial(n\scero)}n\scero) \\
{} &= - \menosuno{x}{y} (\acts{y}x) \t [n\scero, n\scero] \\
{} &= 0,
\end{align*}
for any $x \t y \in M \t N$. This completes the proof.
\end{proof}

\begin{definition} Let $P$ be a Lie superalgebra and $(M,\partial)$ and $(N,\partial')$  two crossed $P$-modules.
The \emph{non-abelian exterior product $M \wedge N$} of the Lie superalgebras $M$ and $N$ is defined by
\[
M \wedge N = \dfrac{M \t N}{M \square N}.
\]
The equivalence class of $m \t n$ will be denoted by $m \wedge n$.
\end{definition}

Note that if $M = M\scero$ and $N = N\scero$ then $M \wedge N$ coincides with the non-abelian exterior product of Lie algebras \cite{Ell1}.

Reviewing Section~\ref{S:tensor}, one can easily check that most of results on the non-abelian tensor product are fulfilled for the non-abelian exterior product.
In particular, there are homomorphisms of Lie superalgebras $M \wedge N \to M$, $M \wedge N \to N$ and actions of
 $M$ and $N$ on $M \wedge N$, induced respectively by the homomorphisms and actions given in Proposition~\ref{P:epim}. It is also satisfied the isomorphism $M \wedge N \cong  N \wedge M$.
  Further, given a short exact sequence of Lie superalgebras
$ 0 \to K \to M \to P \to 0$,
as an exterior analogue of the exact sequence (\ref{C:exact}), we get the following exact sequence of Lie superalgebras
\begin{equation}\label{eq_exact_exterior}
K \wedge M \to M \wedge M \to P \wedge P \to 0.
\end{equation}

Given a Lie superalgebra $M$, since $\ide \colon M \to M$ is a crossed module, we can consider $M \wedge M$. It is the quotient of $M \t M$ by the following  relations
\begin{align*}
m \wedge m' & = -\menosuno{m}{m'}m' \wedge m,\\
m\scero \wedge m\scero & = 0,
\end{align*}
for all $m, m' \in M\scero \cup M\suno$ and $m\scero \in M\scero$.
In the particular case when $M$ is perfect, it is easy to see that $M \square M = 0$, so $M \wedge M \cong M \t M$ and in Theorem~\ref{T:uce} we can replace $M \t M$ by $M \wedge M$.

\subsection{A six term exact homology sequence}
In \cite{Ell2}, the non-abelian exterior product of Lie algebras is used to construct a six-term exact sequence of homology of Lie algebras.
In this section we will extend these results to Lie superalgebras.

First of all, we prove an analogue of Miller's theorem \cite{Mil} on free Lie superalgebras extending the similar result obtained in \cite{Ell2} for Lie algebras.

\begin{proposition}\label{P:free}
Let $F = \efe(X)$ be the free Lie superalgebra on a graded set $X$. Then the homomorphism $F \wedge F \to F$, $x \wedge y \mapsto xy$ is injective.
\end{proposition}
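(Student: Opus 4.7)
The plan is to construct a two-sided inverse $\psi \colon [F,F] \to F \wedge F$ to the surjective Lie superalgebra homomorphism $\mu \colon F \wedge F \twoheadrightarrow [F,F]$, $f \wedge g \mapsto [f,g]$. The key input is the super-analogue of the Shirshov--Witt theorem (see, e.g., \cite{Mus}): any subalgebra of a free Lie superalgebra is itself free. In particular, the derived subalgebra $[F,F]$ is a free Lie superalgebra on some $\mathbb{Z}_2$-graded set $Y$.

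A convenient concrete choice of $Y$: fix a super-Hall basis $\mathcal{B}$ of $F$ extending $X$, so that $\mathcal{B} \setminus X$ is a basis of $[F,F]$ as a super vector space. Up to sign every element of $\mathcal{B} \setminus X$ has the shape $[x, h]$ with $x \in X$ and $h \in \mathcal{B}$; take $Y$ to be a choice of representatives of this form, so each $y \in Y$ comes with a canonical presentation $y = [x_y, h_y]$. I would then define $\psi$ on generators by $\psi(y) := x_y \wedge h_y \in F \wedge F$ and extend to a Lie superalgebra homomorphism via the universal property of $[F,F] \cong \efe(Y)$. The identity $\mu \circ \psi = \ide_{[F,F]}$ is then immediate: on generators $\mu\psi(y) = \mu(x_y \wedge h_y) = [x_y, h_y] = y$.

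The main obstacle is the opposite identity $\psi \circ \mu = \ide_{F \wedge F}$. Iterated use of relation (iii) of Definition~\ref{D:tensor} shows that $F \wedge F$ is spanned, as a supermodule, by simple wedges $x \wedge f$ with $x \in X$ and $f \in F$, so it suffices to check $\psi([x,f]) = x \wedge f$ for every such $x$ and $f$. I would prove this by induction on the Hall-length of $f$ in $\mathcal{B}$. The subtle point is that $\psi$ is only defined on $[F,F]$, whereas $x \in X$ is typically outside $[F,F]$: the inductive step must therefore be carried out inside $F \wedge F$, rewriting $x \wedge [g_1, g_2]$ via the super-antisymmetry and relation (iii) of Definition~\ref{D:tensor} into wedges reachable by the induction hypothesis, and matching the result against $\psi([x,[g_1,g_2]])$ computed from the Jacobi identity in $[F,F]$ together with the definition of $\psi$ on $Y$. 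The bookkeeping of the $\mathbb{Z}_2$-graded signs is the principal combinatorial challenge; once it is handled, injectivity of $F \wedge F \to F$ follows, because $\psi$ gives a well-defined left inverse to $\mu$ on its image $[F,F]$.
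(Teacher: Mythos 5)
Your overall strategy---exhibiting a two-sided inverse to $\mu\colon F\wedge F\to [F,F]$, $f\wedge g\mapsto [f,g]$---is the same as the paper's, but the route you take to build $\psi$ has genuine gaps. First, the super Shirshov--Witt theorem is a statement over a field (with characteristic caveats even there), whereas this paper works over an arbitrary unital commutative ring $\mathbb{K}$; over general $\mathbb{K}$ you cannot assert that the subalgebra $[F,F]$ is free, so the universal property you invoke to define $\psi$ is not available in the stated generality. Second, your ``concrete choice'' of free generators does not exist as described: in a Hall or Lyndon basis it is false that every basis element of length $\geq 2$ has the form $[x,h]$ with $x\in X$ a single generator (e.g.\ the Lyndon word $aabab$ contributes the basis element $[[a,[a,b]],[a,b]]$). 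It is true that $[F,F]$ is \emph{spanned} by elements $[x,f]$ with $x\in X$, but a \emph{free generating set} of that shape would itself require proof, and without the canonical presentations $y=[x_y,h_y]$ your definition of $\psi$ on generators has nothing to stand on. Third, and most seriously, the identity $\psi\circ\mu=\ide$ is where the entire content of the proposition lives, and the induction you sketch does not close: rewriting $x\wedge[g_1,g_2]$ via relation (iii) of Definition~\ref{D:tensor} produces terms such as $g_1\wedge[g_2,x]$ whose \emph{first} factor is no longer in $X$, so they are not reached by an induction on the length of the second factor; and on the other side, evaluating $\psi([x,[g_1,g_2]])$ requires expressing $[x,[g_1,g_2]]$ as a bracket polynomial in the free generators $Y$, which you cannot do by ``peeling off $x$'' since $x\notin[F,F]$ and $\psi$ is only multiplicative on $[F,F]$. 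This is not sign bookkeeping; it is the theorem.

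For contrast, the paper sidesteps all of this by working with the presentation $F=\alg(X)/I$ of Construction~\ref{C:free}. It defines $\phi$ on the span of products inside $\alg(X)$ by $\sum_i\lambda_i x_iy_i\mapsto\sum_i\lambda_i(x_i\wedge y_i)$, observes that $\phi$ is multiplicative because $[x\wedge y,\,x'\wedge y']=xy\wedge x'y'$ in $F\wedge F$, and then checks that $\phi$ annihilates the generators of the ideal $I$ (antisymmetry, graded Jacobi, and $x_{\bar 0}x_{\bar 0}$) using the defining relations of $F\wedge F$---this last point is where the extra relations $m\wedge m'=-(-1)^{|m||m'|}\,m'\wedge m$ and $m_{\bar 0}\wedge m_{\bar 0}=0$ distinguishing $\wedge$ from $\otimes$ are essential. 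The induced map $[F,F]\cong(\alg(X)\cdot\alg(X))/I\to F\wedge F$ is then visibly inverse to $\mu$, with no appeal to freeness of $[F,F]$ and no restriction on $\mathbb{K}$. I would recommend reworking your argument along these lines: replace the Shirshov--Witt input by a well-definedness check on the presentation of $F$.
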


\begin{proof}
Let us prove that $[F, F] \cong F \wedge F$. Using the same notations as in Construction~\ref{C:free},
we define a map $\phi \colon \alg(X) * \alg(X) \to F \wedge F$ by $\sum_i\lambda_ix_iy_i \mapsto \sum_i \lambda_i (x_i \wedge y_i)$,
where $\alg(X) * \alg(X)$ is the free product of superalgebras.
It is easy to see that $\phi$ is a $\mathbb{K}$-superalgebra homomorphism since $[x \wedge y, x' \wedge y'] = xy \wedge x'y'$.
The ideal $I$ is contained in $\alg(X) * \alg(X)$ and by using the defining relations of $F \wedge F$
it is not difficult to check that $\phi$ vanishes on $I$. So we have an induced map from $[F, F]$ to $F \wedge F$,
which is inverse to the homomorphism $F \wedge F \to [F, F]$, $x \wedge y \mapsto xy$.
\end{proof}

Let $P$ be a Lie superalgebra and take the quotient supermodule $(P\wedge_{\mathbb{K}}P)/\Im d_3$,
where $d_3\colon \bigwedge_{\mathbb{K}}^3(P) \to \bigwedge_{\mathbb{K}}^2(P)$ is the boundary map
in the homology complex $(C_*(P,\mathbb{K}), d_*)$. Here $\mathbb{K}$ is considered as a trivial $P$-module.
We define a bracket in $(P\wedge_{\mathbb{K}}P)/\Im d_3$ by setting
\[
[x\wedge y, x'\wedge y']=[x,y]\wedge [x',y']
\]
for all $x,y\in P$. As a particular case of the exterior analogue of Proposition~\ref{P:tensoriso} we have
\begin{lemma}\label{Lemma d3}
There is an isomorphism of Lie superalgebras
\[
\dfrac{P\wedge_{\mathbb{K}}P}{\Im d_3}\approx P\wedge P.
\]
\end{lemma}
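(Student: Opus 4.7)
The plan is to deduce this isomorphism as the special case $M = N = P$ (with each acting on the other by the bracket) of an exterior analogue of Proposition~\ref{P:tensoriso}.

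First I would establish that analogue: for any two crossed $P$-modules $(M,\partial)$, $(N,\partial')$ with the induced compatible actions, there is a natural isomorphism of Lie superalgebras
\[
M \wedge N \;\cong\; \dfrac{M \wedge_{\mathbb{K}} N}{D'(M,N)},
\]
where $D'(M,N)$ is the submodule obtained by reading the defining relations (i)--(v) of Proposition~\ref{P:tensoriso} inside $M\wedge_{\mathbb{K}}N$ instead of $M\t_{\mathbb{K}}N$. Since $M\wedge N=(M\t N)/(M\square N)$ and $M\t N\cong(M\t_{\mathbb{K}}N)/D(M,N)$ by Proposition~\ref{P:tensoriso}, the further quotient by the pre-image of $M\square N$ absorbs precisely the graded antisymmetry and the $m_{\bar{0}}\wedge m_{\bar{0}}=0$ identities, which is exactly what turns $M\t_{\mathbb{K}}N$ into $M\wedge_{\mathbb{K}}N$. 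The bracket $[m\wedge n,m'\wedge n']=-\menosuno{m}{n}(\acts{n}m\wedge\acts{m'}n')$ descends to the quotient by the same verification as in the proof of Proposition~\ref{P:tensoriso}.

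Next I would specialize to $M=N=P$ with mutual action the bracket. Here relation (iii) becomes $[n,m]\wedge[m,n]$, which is either an even element wedged with itself (killed by $x_{\bar{0}}\wedge x_{\bar{0}}=0$) or eliminated by graded antisymmetry; relation (iv) is absorbed similarly; and relation (v) follows from the graded Jacobi identity in $P$ together with relation (i). Relations (i) and (ii) both reduce, after an application of antisymmetry, to the single family
\[
R(x,y,z) \;=\; [x,y]\wedge z \;-\; x\wedge[y,z] \;+\; \menosuno{x}{y}\,y\wedge[x,z], \qquad x,y,z\in P.
\]
Thus $D'(P,P)$ is the submodule of $P\wedge_{\mathbb{K}}P$ generated by the elements $R(x,y,z)$.

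Finally I would identify this submodule with $\Im d_3$. Regarding $\mathbb{K}$ as the trivial $P$-supermodule, the first summand of the Chevalley--Eilenberg differential from Section~\ref{S:prel} vanishes on $x\wedge y\wedge z\t 1$, since $x_i\cdot 1=0$. Only the Jacobi-type second sum survives, and a careful sign computation (the Koszul sign from the index pair $(2,3)$, after moving the surviving factor $x$ back in front, equals $+1$, while the pair $(1,3)$ produces a factor $\menosuno{y}{z}$ which is cancelled by the further antisymmetry move needed to rewrite $[x,z]\wedge y$ as $-\menosuno{x}{y}y\wedge[x,z]$) yields
\[
d_3(x\wedge y\wedge z) \;=\; -\,R(x,y,z).
\]
Hence $D'(P,P)=\Im d_3$ inside $P\wedge_{\mathbb{K}}P$, and the exterior analogue above supplies the desired isomorphism, with induced bracket $[x\wedge y,x'\wedge y']=[x,y]\wedge[x',y']$.

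The only substantive obstacle is the sign bookkeeping in the last step: there are three Koszul-signed terms in $d_3(x\wedge y\wedge z)$ and three terms in $R(x,y,z)$, and one must track the graded antisymmetry swaps needed to align them. The content of the lemma is simply that the graded Jacobi identity in $P$ and the image of $d_3$ are two packagings of the same relation; no new idea is required beyond Proposition~\ref{P:tensoriso} and this matching.
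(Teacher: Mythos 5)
Your proposal is correct and takes essentially the same route as the paper, which simply declares the lemma to be a particular case of the exterior analogue of Proposition~\ref{P:tensoriso}; you have merely filled in the specialization to $M=N=P$ and the sign computation showing $d_3(x\wedge y\wedge z)=-\bigl([x,y]\wedge z - x\wedge[y,z]+\menosuno{x}{y}\,y\wedge[x,z]\bigr)$, both of which check out.
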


\begin{cor}\label{C:hopf}\hfill
\begin{itemize}[font=\normalfont]
\item[(i)] For any Lie superalgebra $P$ there is an isomorphism of supermodules
\[
H_2(P)\cong \Ker(P\wedge P\to P).
\]
\item[(ii)] $H_2(F)=0$ if $F$ is a free Lie superalgebra.
\item[(iii)] (Hopf Formula) Given a free presentation
$ 0\to R\to F\to P\to 0 $
of a Lie superalgebra $P$, there is an isomorphism of supermodules
\[
H_2(P)\cong \dfrac{R\cap [F,F]}{[F,R]}.
\]
\end{itemize}
\end{cor}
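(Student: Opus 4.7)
The plan is to prove the three statements in order, since (ii) is an immediate consequence of (i) and Proposition~\ref{P:free}, and (iii) follows from (i), (ii) and a diagram chase using the exact sequence~\eqref{eq_exact_exterior}.

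For (i), I would look at the chain complex $(C_*(P,\mathbb{K}),d_*)$ computing $H_*(P)$. Since $\mathbb{K}$ is a trivial $P$-supermodule, the first sum in the formula defining $d_n$ vanishes, and in particular $d_2\colon \bigwedge_{\mathbb{K}}^2(P)\to P$ reduces, up to sign, to the bracket $x\wedge y\mapsto [x,y]$. Thus $H_2(P)=\Ker d_2/\Im d_3$. On the other hand, Lemma~\ref{Lemma d3} identifies $P\wedge P$ with the quotient $\bigwedge_{\mathbb{K}}^2(P)/\Im d_3$, and under this identification the commutator map $P\wedge P\to P$, $x\wedge y\mapsto [x,y]$, is precisely induced by $d_2$. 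Therefore its kernel is $\Ker d_2/\Im d_3=H_2(P)$, proving (i).

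Statement (ii) is then immediate: by (i), $H_2(F)\cong\Ker(F\wedge F\to F)$, and Proposition~\ref{P:free} asserts that this map is injective, so $H_2(F)=0$.

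For the Hopf formula (iii), given a free presentation $0\to R\to F\to P\to 0$, I would apply the exterior analogue of~\eqref{eq_exact_exterior} together with the commutator maps in a commutative diagram with exact rows
\[
\xymatrix{
R\wedge F \ar[r]^{\alpha}\ar[d] & F\wedge F \ar[r]^{\beta}\ar[d] & P\wedge P \ar[r]\ar[d] & 0 \\
R\cap [F,F] \ar@{^{(}->}[r] & [F,F] \ar@{->>}[r] & [P,P] \ar[r] & 0.
}
\]
The right and middle vertical maps are the surjections induced by the bracket; their kernels are, respectively, $H_2(P)$ by (i) and $H_2(F)=0$ by (ii). The image of $R\wedge F$ inside $F$ under the composite is $[R,F]=[F,R]$, which lies in $R\cap[F,F]$ because $R$ is a graded ideal, so the left vertical map is well-defined with image $[F,R]$. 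A direct application of the snake lemma (or a routine diagram chase) then gives
\[
H_2(P)\cong\frac{R\cap [F,F]}{[F,R]},
\]
as required.

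The main technical point is ensuring that the identifications in (i) are clean: checking that the signs in the definition of $d_2$ agree with the bracket map out of $P\wedge P$, and that the supermodule quotient in Lemma~\ref{Lemma d3} really coincides with the quotient by $\Im d_3$. Once (i) is established, both (ii) and (iii) reduce to standard homological algebra, with (iii) being a direct transcription of the classical Hopf formula argument to the super setting.
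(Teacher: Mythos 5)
Your proposal is correct and follows essentially the same route as the paper: part (i) from Lemma~\ref{Lemma d3} and the identification of the bracket map with $d_2$, part (ii) from (i) and Proposition~\ref{P:free}, and part (iii) from the exact sequence \eqref{eq_exact_exterior} together with $F\wedge F\cong[F,F]$. Your explicit snake-lemma diagram for (iii) is just a slightly more detailed packaging of the paper's direct identification $P\wedge P\cong [F,F]/[F,R]$ followed by taking the kernel of the map to $P$.
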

\begin{proof}\hfill
\begin{enumerate}[leftmargin=0.cm,itemindent=.5cm,labelsep=0.2cm,align=left]
\item[(i)] This follows immediately from Lemma~\ref{Lemma d3}.

\item[(ii)] This is a consequence of (i) and Proposition~\ref{P:free}.

\item[(iii)] Since $F\wedge F\cong [F,F]$, using the exact sequence \eqref{eq_exact_exterior}, we have
\[
P\wedge P\cong \dfrac{[F,F]}{[F,R]}.
\]
Then Lemma~\ref{Lemma d3} completes the proof. \qedhere
\end{enumerate}
\end{proof}

\begin{theorem}
Let $M$ be a graded ideal of a Lie superalgebra $P$. Then there is an exact sequence
\[
\Ker(P \wedge M \to P) \to \Ho_2(P) \to \Ho_2(P/M) \to \dfrac{M}{[P, M]}
\to \Ho_1(P) \to \Ho_1(P/M) \to 0.
\]
\end{theorem}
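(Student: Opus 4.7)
The plan is to derive the six-term sequence from the snake lemma, applied to a two-row commutative diagram whose top row is the exterior-product sequence from \eqref{eq_exact_exterior} and whose bottom row is the given short exact sequence $0 \to M \to P \to P/M \to 0$, joined vertically by the commutator homomorphisms. The identification of the second homology in terms of exterior products will come from Corollary~\ref{C:hopf}(i), while the identification of the first homology with the abelianization is standard.

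The first step is to specialize \eqref{eq_exact_exterior} to the short exact sequence $0 \to M \to P \to P/M \to 0$ (taking $M$, $P$, $P/M$ in place of the $K$, $M$, $P$ of \eqref{eq_exact_exterior}), yielding
\[
P \wedge M \longrightarrow P \wedge P \longrightarrow (P/M) \wedge (P/M) \longrightarrow 0.
\]
Placing this above $0 \to M \to P \to P/M \to 0$ and linking the two rows by the commutator morphisms $x \wedge y \mapsto [x, y]$ gives a commutative diagram of supermodules in which, because $M$ is a graded ideal, the leftmost vertical arrow factors through $M \hookrightarrow P$; since this inclusion is injective, $\Ker(P \wedge M \to P)$ coincides with the kernel of $P \wedge M \to M$, which is the term that will appear on the left of the snake sequence.

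The second step is to identify the vertical kernels and cokernels. By Corollary~\ref{C:hopf}(i) the two rightmost vertical kernels are $\Ho_2(P)$ and $\Ho_2(P/M)$. The three vertical cokernels are $M/[P, M]$, $P/[P, P] \cong \Ho_1(P)$ and $(P/M)/[P/M, P/M] \cong \Ho_1(P/M)$. The snake lemma applied to this diagram then produces exactly the six-term exact sequence in the statement, with the horizontal arrows induced functorially by $M \hookrightarrow P$ and $P \twoheadrightarrow P/M$ and with the snake connecting morphism providing $\Ho_2(P/M) \to M/[P, M]$.

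The main obstacle, such as it is, is essentially bookkeeping: one must verify that the squares in the diagram commute, and that the top row is genuinely right exact. Commutativity is immediate from the construction of the morphisms $\phi \wedge \psi$ (the exterior analogue of the functorial tensor morphisms preceding Proposition~\ref{P:tenfunc}) together with Proposition~\ref{P:epim}(i). Right exactness of the top row is the content of \eqref{eq_exact_exterior}, obtained by transferring the proof of Proposition~\ref{P:tenfunc} to the exterior product, which is legitimate because $M \square N$ is functorial in the pair $(M, N)$. No super-sign issues disrupt the argument since the snake lemma is applied in the category of supermodules and every vertical map in the diagram is even.
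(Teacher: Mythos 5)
Your proposal is correct and takes essentially the same route as the paper: both apply the snake lemma to the commutative diagram with top row $P\wedge M \to P\wedge P \to (P/M)\wedge(P/M)\to 0$ (from the exterior analogue of the right-exactness of the tensor product) over the bottom row $0\to M\to P\to P/M\to 0$, identify the two right-hand vertical kernels via Corollary~\ref{C:hopf}(i) and the cokernels with $M/[P,M]$, $\Ho_1(P)$, $\Ho_1(P/M)$. The only cosmetic difference is that you spell out the bookkeeping (commutativity of the squares, the identification $\Ker(P\wedge M\to P)=\Ker(P\wedge M\to M)$) that the paper leaves implicit.
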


\begin{proof}
By using the exact sequence \eqref{eq_exact_exterior} we have the following commutative diagram of Lie superalgebras with exact rows
\[
\xymatrix{
         & M \wedge P \ar[d] \ar[r] & P \wedge P \ar[d] \ar[r] & \dfrac{P}{M}  \wedge \dfrac{P}{M} \ar[d] \ar[r] & 0  \\
0 \ar[r] & M             \ar[r] & P             \ar[r] & \dfrac{P}{M}             \ar[r] & 0.
}
\]
Since $\Coker(M \wedge P \cong P \wedge M \to M) \cong M/[P, M]$ and $\Coker(P \wedge P \to P) \cong P / [P, P] \cong \Ho_1(P)$,
then the assertion follows by using snake lemma and Corollary~\ref{C:hopf}(i).
\end{proof}

In particular, if $P$ is a Lie algebra and $M$ is an ideal of $P$, then this sequence coincides with the six-term exact sequence in the homology of Lie algebras obtained in \cite{Ell2}.

\section*{Acknowledgements}
 The authors wish to thank the anonymous referee for his help in improving
the presentation of this paper.
The authors were supported by Ministerio de Economía y Competitividad (Spain), grant MTM2013-43687-P (European FEDER support included).
The first and third authors were also supported by Xunta de Galicia, grant GRC2013-045 (European FEDER support included).
The first author was also supported  by FPU scholarship, Ministerio de Educación, Cultura y Deporte  (Spain).
The second author was also supported by Xunta de Galicia, grant EM2013/016 (European FEDER support included) and
Shota Rustaveli National Science Foundation, grant DI/12/5-103/11.


\begin{thebibliography}{10}


\bibitem{BrLo}
R.~Brown, J.-L. Loday, Van {K}ampen theorems for diagrams of spaces, Topology
  26~(3) (1987) 311--335, with an appendix by M. Zisman.

\bibitem{CCF}
C.~Carmeli, L.~Caston, R.~Fioresi, Mathematical foundations of supersymmetry,
  EMS Series of Lectures in Mathematics, European Mathematical Society (EMS),
  Z\"urich, 2011.

\bibitem{CaLa}
J.~M. Casas, M.~Ladra, Perfect crossed modules in {L}ie algebras, Comm. Algebra
  23~(5) (1995) 1625--1644.

\bibitem{CGL}
J.~Castiglioni, X.~Garc\'ia-Mart\'inez, M.~Ladra, Universal central extensions
  of {L}ie--{R}inehart algebras, arXiv:1403.7159 (2014).

\bibitem{ChSu}
H.~Chen, J.~Sun, Universal central extensions of $sl_{m|n}$ over
  {$\mathbb{Z}/2\mathbb{Z}$}-graded algebras, J. Pure Appl. Algebra 219
  (2015) 4278--4294.

\bibitem{DeSt}
R.~K. Dennis, M.~R. Stein, {$K_{2}$} of discrete valuation rings, Advances in
  Math. 18~(2) (1975) 182--238.

\bibitem{Ell2}
G.~J. Ellis, Nonabelian exterior products of {L}ie algebras and an exact
  sequence in the homology of {L}ie algebras, J. Pure Appl. Algebra 46~(2-3)
  (1987) 111--115.


\bibitem{Ell1}
G.~J. Ellis, A nonabelian tensor product of {L}ie algebras, Glasgow Math. J.
  33~(1) (1991) 101--120.

\bibitem{GaLa}
X.~Garc\'ia-Mart\'inez, M.~Ladra, Universal central extensions of
  $\mathfrak{sl}(m, n, a)$ of small rank over associative superalgebras,
  arXiv:1405.4035 (2014).

\bibitem{Gui}
D.~Guin, Cohomologie des alg\`ebres de {L}ie crois\'ees et {$K$}-th\'eorie de
  {M}ilnor additive, Ann. Inst. Fourier (Grenoble) 45~(1) (1995) 93--118.

\bibitem{HiSt}
P.~J. Hilton, U.~Stammbach, A course in homological algebra, vol.~4 of Graduate
  Texts in Mathematics, 2nd ed., Springer-Verlag, New York, 1997.

\bibitem{InKhLa}
N.~Inassaridze, E.~Khmaladze, M.~Ladra, Non-abelian homology of {L}ie algebras,
  Glasg. Math. J. 46~(2) (2004) 417--429.


\bibitem{IoKo}
K.~Iohara, Y.~Koga, Second homology of {L}ie superalgebras, Math. Nachr.
  278~(9) (2005) 1041--1053.


\bibitem{Kac}
V.~G. Kac, Lie superalgebras, Advances in Math. 26~(1) (1977) 8--96.

\bibitem{Kas}
C.~Kassel, A {K}\"unneth formula for the cyclic cohomology of {${\bf
  Z}/2$}-graded algebras, Math. Ann. 275~(4) (1986) 683--699.


\bibitem{KaLo}
C.~Kassel, J.-L. Loday, Extensions centrales d'alg\`ebres de {L}ie, Ann. Inst.
  Fourier (Grenoble) 32~(4) (1982) 119--142.

\bibitem{Mil}
C.~Miller, The second homology group of a group; relations among commutators,
  Proc. Amer. Math. Soc. 3 (1952) 588--595.

\bibitem{Mus}
I.~M. Musson, Lie superalgebras and enveloping algebras, vol. 131 of Graduate
  Studies in Mathematics, American Mathematical Society, Providence, RI, 2012.

\bibitem{Neh}
E.~Neher, An introduction to universal central extensions of {L}ie
  superalgebras, in: Groups, rings, {L}ie and {H}opf algebras ({S}t. {J}ohn's,
  {NF}, 2001), vol. 555 of Math. Appl., Kluwer Acad. Publ., Dordrecht, 2003,
  pp. 141--166.

\bibitem{STME}
A.~R. Salemkar, H.~Tavallaee, H.~Mohammadzadeh, B.~Edalatzadeh, On the
  non-abelian tensor product of {L}ie algebras, Linear Multilinear Algebra
  58~(3-4) (2010) 333--341.


\bibitem{Sch}
M.~Scheunert, The theory of {L}ie superalgebras. An introduction, vol. 716 of
  Lecture Notes in Mathematics, Springer, Berlin, 1979.

\bibitem{Tan}
J.~Tanaka, On homology and cohomology of {L}ie superalgebras with coefficients
  in their finite-dimensional representations, Proc. Japan Acad. Ser. A Math.
  Sci. 71~(3) (1995) 51--53.

 \bibitem{ZL}
 T.~Zhang,  Z.~Liu,
Omni-Lie superalgebras and Lie 2-superalgebras,
Front. Math. China 9~(5) (2014) 1195--1210.

\end{thebibliography}

\end{document}